\numberwithin{equation}{section}
\newtheorem{theorem}{Theorem}[section]
\newtheorem{corollary}[theorem]{Corollary}
\newtheorem{proposition}[theorem]{Proposition}
\newtheorem{lemma}[theorem]{Lemma}
\theoremstyle{remark}
\newtheorem{remark}{Remark}[section]
\theoremstyle{definition}
\newtheorem{definition}[theorem]{Definition}
\newcommand{\R}{{\mathbb R}}
\newcommand{\Z}{{\mathbb Z}}
\newcommand{\PP}{\mathcal{P}_l}
\newcommand{\C}{{\mathbb C}} 
\newcommand{\D}{\mathcal{D}}
\newcommand{\parmag}{\mathcal{D}_{A,l}}
\newcommand{\sol}{e^{it\mathcal{D}_A}f}
\newcommand{\ksol}{e^{it\mathcal{D}_A}F_l}
\newcommand{\four}{\mathcal{F}}
\newcommand{\heav}{\chi_{\mathbb{R}^+}}
\def\XXint#1#2#3{{\setbox0=\hbox{$#1{#2#3}{\int}$ }
\vcenter{\hbox{$#2#3$ }}\kern-.58\wd0}}
\begin{document}

\title
[Aharonov-Bohm field and Dirac equation]
{Dispersive estimates for the Dirac equation in an Aharonov-Bohm field }

\begin{abstract}
\end{abstract}

\date{\today}    


\author{F.~Cacciafesta}
\address{Federico Cacciafesta: Dipartimento di Matematica e Applicazioni, Universit$\grave{\text{a}}$ di Milano Bicocca, via R. Cozzi, 53, 20125 Milano, Italy.}
\email{federico.cacciafesta@unimib.it}

\author{L.~Fanelli}
\address{Luca Fanelli: SAPIENZA Universit$\grave{\text{a}}$ di Roma, Dipartimento di Matematica, P.le A. Moro 5, 00185 Roma, Italy.}
\email{fanelli@mat.uniroma1.it}

\subjclass[2010]{35J10, 35B99.}
\keywords{Dirac equation, magnetic potentials, local smoothing}


\begin{abstract}
We prove local smoothing and weighted Strichartz estimates for the Dirac equation with a Aharonov-Bohm potential. The proof, inspired by \cite{cacser}, relies on an explicit representation of the solution built in terms of spectral projections.
\end{abstract}

\date{\today}
\maketitle

\section{Introduction}

The Dirac Hamiltonian in the Aharonov-Bohm magnetic field (in the units with $h=c=1$) is
\begin{equation}\label{ham}
\mathcal{D}^m_A=\left(\begin{array}{cc}m & D^* \\D & -m\end{array}\right),\quad
D=(p_1+A^1)+i(p_2+A^2)
\end{equation}
where $p_j=i\partial_j$ and the magnetic potential $A$ reads as, in the radial gauge,
$$
A_r=0,\qquad A_\phi=\frac\alpha{2\pi r}\Phi_0.
$$ 
In terms of Pauli matrices, we can rewrite Hamiltonian \eqref{ham} as

$$
\mathcal{D}^m_A(A)=\sigma_3 m+\sigma_1(p_1+A^1)+\sigma_2(p_2+A^2)
$$
where
\begin{equation}
\sigma_1=\left(\begin{array}{cc}0 & 1 \\1 & 0\end{array}\right),\quad
\sigma_2=\left(\begin{array}{cc}0 &-i \\i & 0\end{array}\right),\quad
\sigma_3=\left(\begin{array}{cc}1 & 0\\0 & -1\end{array}\right)
\end{equation}
and the magnetic potential $A(x)=(A_1(x),A_2(x))$ is given by
\begin{equation*}
A(x)=\alpha\left(-\frac{x_2}{|x|^2},\frac{x_1}{|x|^2}\right).
\end{equation*}
We recall that the Pauli matrices satisfy the following relations of anticommutations
\begin{equation*}
\sigma_j\sigma_k+\sigma_k\sigma_j=2\delta_{ik}\mathbb{I}_2,\quad j,k=1,2.
\end{equation*}
The Aharonov-Bohm effect was firstly predicted in \cite{ahbohm}: it occurs when electrons propagate in a domain with a zero magnetic field but with a nonzero vector potential $A^\mu$. The potential magnetic field is totally confined within a cylindrical tube of infinitesimal radius (see \cite{peston} and references therein for greater details).
In what follows we will restrict our attention to the massless case, i.e. $m=0$, and we will denote the corresponding Hamiltonian with $\mathcal{D}_A$. 

In the present paper we are interested in studying dispersive properties of the flow associated to the Hamiltonian $\mathcal{D}_A$: we thus mean to study the Cauchy problem 
\begin{equation}\label{diraceq}
\begin{cases}
\displaystyle
 i\partial_tu=\mathcal{D}_Au,\quad u(t,x):\mathbb{R}_t\times\mathbb{R}_x^2\rightarrow\mathbb{C}^{2}\\
u(0,x)=f(x).
\end{cases}
\end{equation}

We stress the fact that the magnetic potential $A$ is critical with respect to the scaling of the massless Dirac operator; as it is well known, the study of dispersive estimates for flows perturbed by scaling-critical potentials represents a particularly interesting and challenging problem, as perturbation arguments typically do not work in this setting. In this framework we mention \cite{burplan}-\cite{burplan2} in which smoothing and Strichartz estimates for the Schr\"odinger and wave equations with inverse square potential are proved, then \cite{fanfel} in which the $L^1\rightarrow L^\infty$ time decay is proved for a wide class of Schr\"odinger flows with critical electromagnetic potentials, later \cite{cacser} in which local smoothing estimates for the Dirac-Coulomb equation is discussed, and finally \cite{cacfan} which is devoted to the study of weak dispersion for fractional Aharonov-Bohm-Schr\"odinger groups. On the other hand, the study of dispersive estimates for the Dirac equation perturbed with small magnetic potential has been developed e.g. in \cite{bousdanfan} and \cite{cacmag}.

The first problem to be addressed in order to study the dynamics of equation \eqref{diraceq} is the self-adjointness of the Hamiltonian $\mathcal{D}_A$. As $\mathcal{D}_A$ commutes with the angular momentum operator, we can resort on the classical partial wave decomposition
$$L^2(\mathbb{R}^2)^{2} \cong L^2((0,\infty),dr)\bigotimes_{l\in\mathbb{Z}} h_l$$
 and reduce the problem to study the self-adjointness of "radial" Dirac operators $\parmag$ (see forthcoming Proposition \ref{thaller} and formula \eqref{partham} for details). By resorting on general theory (see Remark \ref{selfadj}) it is possible to show that the operators $\parmag$, which can be originally defined on the space $C^\infty_0(\mathbb{R}^+)$, are essentially self-adjoint as long as $l$ is {\em not} in the range $-1<l+\alpha<0$. This case needs to be discussed separately: it can be shown that the corresponding operator admits a one parameter family of self adjoint extensions, which can be distinguished by imposing different boundary conditions.

Before stating our main result, let us fix some useful notations. We shall denote as usual with $L^p_tL^q_x=L^p(\mathbb{R}_t; L^q(\mathbb{R}^n_x))$ the mixed space-time Strichartz spaces; with $L^2_{rdr}$ we will denote the $L^2$ space with respect to the measure $rdr$. The presence of the subscript $L^p_T$ will be a shortcut for $L^p_t([0,T])$, while we denote with $\|f\|_{L^2_{|x|\geq A}}:=\| f\:\chi_{|x|\geq A}\|_{L^2}$ where $\chi_A$ is the standard characteristic function of the set $A$.
%
We denote with $\Omega^s$ the operator
\begin{equation*}
(\Omega^s\phi)(x)=|x|^s\phi(x)
\end{equation*}
and with a little abuse of notation we will use the same symbol to indicate the operators which are pointwise equal for all times,
\begin{equation*}
(\Omega^s\psi)(t,x)=|x|^s\psi(t,x).
\end{equation*}
We will also make use of the Gauss hypergeometric function, that we recall to be
$$
{_2F_1}(a,b;c;z)=\sum_{n=0}^\infty\frac{(a)_n(b)_n}{(c)_n}\frac{z^n}{n!}
$$
where here $(q)_n$ denotes the Pochhammer symbol. We refer to \cite{erd} as a general reference for details and properties of special functions.
\medskip

We are now ready to state the first result of this paper, that is a local smoothing estimate for solutions of \eqref{diraceq}; we prefer to postpone the precise definition of the  angular spaces $\mathcal{H}_{\geq \overline{l}}$ to forthcoming Proposition \ref{thaller}.
\begin{theorem}\label{thlocsmooth}
Let $\alpha\in\mathbb{R}$ and $u(t,x)$ be a solution of \eqref{diraceq}. Then for any 
\begin{equation}\label{range}
1/2<\gamma<1+|l+\alpha|.
\end{equation}
and any $f\in L^2((0,\infty)rdr)\otimes \mathcal{H}_{\geq \overline{l}}$ there exists a constant $c=c(\alpha,\gamma,l)$ such that the following estimate holds 
\begin{equation}\label{est}
\displaystyle
\left\|\Omega^{-\gamma}\mathcal{D}_A^{1/2-\gamma} u\right\|_{L^2_tL^2_x}\leq c \|f\|_{L^2}.
\end{equation}
In addition, in the endpoint case $\gamma=1/2$ the following estimate holds
\begin{equation}\label{locendec}
\sup_{R>0}R^{-1/2}\|\sol\|_{L^2_t L^2_{|x|\leq R}} \lesssim \|f\|_{L^2},
\end{equation}
\end{theorem}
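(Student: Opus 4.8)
The plan is to reduce the space--time estimates, via the partial wave decomposition, to one--dimensional integrals involving Bessel functions, and to read off the range \eqref{range} from the convergence of a single Weber--Schafheitlin integral. First I would project the flow onto the angular sectors: since $\mathcal{D}_A$ commutes with the angular momentum, writing $f=\sum_l f_l$ with $f_l\in L^2((0,\infty),rdr)\otimes h_l$ gives $\sol=\sum_l e^{it\parmag}f_l$, and both $\Omega^{-\gamma}$ and $\mathcal{D}_A^{1/2-\gamma}$ act diagonally with respect to this splitting. By orthogonality of the $h_l$ it then suffices to prove the estimate on a single sector, uniformly in $l$, and sum. On the $l$-th sector Proposition \ref{thaller} supplies an explicit spectral representation of $e^{it\parmag}$ as a Hankel-type transform, whose generalized eigenspinors $\psi_\lambda$ have components that are Bessel functions $J_\nu(\lambda r)$ with $\nu=|l+\alpha|$ (and a companion order shifted by one coming from the spinor structure); schematically, and with $\mathcal{D}_A^{1/2-\gamma}$ acting as multiplication by $|\lambda|^{1/2-\gamma}$,
\[
 e^{it\parmag}f_l(r)=\int_{\mathbb{R}}e^{it\lambda}\,\psi_\lambda(r)\,\widetilde{f}_l(\lambda)\,|\lambda|\,d\lambda .
\]

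The core of the argument is then to take the $L^2_t$ norm and apply Plancherel in time, which collapses the oscillatory integral. After also integrating in $r$ against $r\,dr$ this yields
\[
 \bigl\|\Omega^{-\gamma}\mathcal{D}_A^{1/2-\gamma}e^{it\parmag}f_l\bigr\|_{L^2_tL^2_{rdr}}^2
 =2\pi\int_{\mathbb{R}}|\lambda|^{3-2\gamma}\,|\widetilde{f}_l(\lambda)|^2\,\Bigl(\int_0^\infty r^{1-2\gamma}|\psi_\lambda(r)|^2\,dr\Bigr)\,d\lambda .
\]
The rescaling $s=|\lambda|r$ turns the inner integral into $|\lambda|^{2\gamma-2}$ times the pure Bessel integral $\int_0^\infty s^{1-2\gamma}|J_\nu(s)|^2\,ds$ (plus its shifted-order counterpart), a Weber--Schafheitlin integral evaluating to an explicit ratio of Gamma functions, with the Gauss hypergeometric function ${}_2F_1$ entering in the off-diagonal contribution between the two spinor components. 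The two powers of $|\lambda|$ cancel against the Hankel--Plancherel density, leaving exactly $C(\gamma,\nu)\int_{\mathbb{R}}|\lambda|\,|\widetilde{f}_l(\lambda)|^2\,d\lambda=C(\gamma,\nu)\,\|f_l\|_{L^2_{rdr}}^2$.

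Everything hinges on the finiteness of $\int_0^\infty s^{1-2\gamma}|J_\nu(s)|^2\,ds$: the behaviour $J_\nu(s)\sim s^\nu$ at the origin forces $\gamma<1+\nu$, while $J_\nu(s)\sim s^{-1/2}$ at infinity forces $\gamma>1/2$, which is precisely the range \eqref{range}. Since $C(\gamma,\nu)$ decays as $\nu\to\infty$ by Stirling, the constants are uniformly bounded over the modes present in $\mathcal{H}_{\geq\overline{l}}$, for which $|l+\alpha|$ is bounded below by its smallest value (attained at the lowest mode, where the constraint $\gamma<1+|l+\alpha|$ binds); summing over these sectors produces \eqref{est}. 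For the endpoint $\gamma=1/2$ in \eqref{locendec} the same computation, now with no weight and the spatial integral truncated to $|x|\le R$, replaces the logarithmically divergent full Bessel integral by $\int_0^{\lambda R}|J_\nu(s)|^2 s\,ds\lesssim \lambda R$, a bound uniform in $\nu\ge 0$; the resulting factor $R$ is absorbed by the normalization $R^{-1/2}$, giving the stated supremum bound.

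I expect the main obstacle to lie in the first step: making the spectral representation genuinely rigorous and, in particular, treating uniformly the exceptional range $-1<l+\alpha<0$, where $\parmag$ is only essentially self-adjoint after a choice of boundary condition, so that the eigenspinors---and hence the Bessel orders $\nu$ feeding the key integral---are correctly identified in every sector. Once the representation is in hand the remaining difficulty is purely the special-function bookkeeping of the second step, where one must check that the off-diagonal ${}_2F_1$ terms obey the same range restriction as the diagonal ones and do not degrade the constant.
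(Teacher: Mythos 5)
Your proposal is correct and follows essentially the same route as the paper: partial wave decomposition, the explicit Bessel eigenspinors $(J_{|l+\alpha|},J_{|l+1+\alpha|})$ from the spectral representation, Plancherel in time to pin the frequency, reduction to the diagonal Weber--Schafheitlin integral $\int_0^\infty s^{1-2\gamma}J_\nu(s)^2\,ds$ whose convergence yields exactly $1/2<\gamma<1+|l+\alpha|$, and for the endpoint the uniform-in-order bound $\int_0^R J_\nu(\lambda r)^2\,r\,dr\lesssim R/\lambda$ of Strichartz/Stempak. The only difference is organizational --- you compute the weighted diagonal integral directly after Plancherel, while the paper conjugates the weight into the fractional operator $\parmag^{-\gamma}$ with hypergeometric kernel $S_l^{-\gamma}(r,s)$ and composes kernels before evaluating ${_2F_1}(a,b;c;1)$ by Gauss's formula --- so your worry about off-diagonal ${_2F_1}$ terms is in fact vacuous in your formulation, since the spinor components contribute no cross terms on the diagonal.
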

From the point of view of the range in \eqref{range}, the worst frequency $l$ is the closest one to the circulation $\alpha\in\R$. Indeed, 
as an immediate consequence of \eqref{est}, we obviously obtain the following result for a generic non-localized $L^2$-function.
\begin{corollary}\label{cor:1}
Let $\alpha\in\mathbb{R}$, denote by $\mu_0:=\text{dist}(\alpha,\R)$ and let $u(t,x)$ be a solution of \eqref{diraceq}. Then for any 
\begin{equation}\label{range}
1/2<\gamma<1+\mu_0.
\end{equation}
and any $f\in L^2(\R^2)$ there exists a constant $c=c(\alpha,\gamma,l)$ such that the following estimate holds 
\begin{equation}\label{estestest}
\displaystyle
\left\|\Omega^{-\gamma}\mathcal{D}_A^{1/2-\gamma} u\right\|_{L^2_tL^2_x}\leq c \|f\|_{L^2}.
\end{equation}
\end{corollary}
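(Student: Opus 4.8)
The plan is to deduce the estimate for a generic $f\in L^2(\R^2)$ from the single-frequency estimate \eqref{est} of Theorem \ref{thlocsmooth}, by decomposing $f$ along the partial wave decomposition and exploiting the orthogonality of the angular sectors. First I would write $f=\sum_{l\in\Z}f_l$, where $f_l$ denotes the projection of $f$ onto the $l$-th angular sector $L^2((0,\infty)\,rdr)\otimes\mathcal{H}_{\geq\overline{l}}$ appearing in Proposition \ref{thaller}. Since the flow $e^{it\mathcal{D}_A}$ commutes with the angular momentum, it preserves each sector, so that $u=e^{it\mathcal{D}_A}f=\sum_l u_l$ with $u_l:=e^{it\mathcal{D}_A}f_l$ again supported in the $l$-th sector. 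This reduces the problem to the frequency-by-frequency estimate already established, together with a summation.

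Next I would observe that both operators on the left-hand side of \eqref{estestest} act diagonally with respect to this decomposition: $\Omega^{-\gamma}$ is multiplication by the radial weight $|x|^{-\gamma}$, and $\mathcal{D}_A^{1/2-\gamma}$ is a function of $\mathcal{D}_A$, so each maps every angular sector to itself. Consequently the summands $\Omega^{-\gamma}\mathcal{D}_A^{1/2-\gamma}u_l$ remain mutually orthogonal in $L^2_x$ for every fixed $t$, and by Plancherel in the angular variable together with Fubini one gets
\begin{equation*}
\left\|\Omega^{-\gamma}\mathcal{D}_A^{1/2-\gamma} u\right\|_{L^2_tL^2_x}^2=\sum_{l\in\Z}\left\|\Omega^{-\gamma}\mathcal{D}_A^{1/2-\gamma} u_l\right\|_{L^2_tL^2_x}^2,\qquad \|f\|_{L^2}^2=\sum_{l\in\Z}\|f_l\|_{L^2}^2.
\end{equation*}
Applying \eqref{est} to each $u_l$ is legitimate precisely because the hypothesis $\gamma<1+\mu_0$, with $\mu_0=\inf_{l\in\Z}|l+\alpha|$, guarantees $\gamma<1+|l+\alpha|$ for every $l$, so that $\gamma$ lies in the admissible interval \eqref{range} simultaneously for all frequencies. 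Summing the two displayed identities would then yield the claim, provided the constants are controlled uniformly in $l$.

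The main obstacle is exactly this last point: the constant $c(\alpha,\gamma,l)$ produced by Theorem \ref{thlocsmooth} a priori depends on $l$, and to sum the orthogonal pieces I need $C:=\sup_{l\in\Z}c(\alpha,\gamma,l)<\infty$. I would verify this by inspecting the dependence on $l$ inside the proof of \eqref{est}: the estimate degenerates only as $\gamma$ approaches the upper endpoint $1+|l+\alpha|$, so the worst contribution comes from the frequency $l$ minimizing $|l+\alpha|$, i.e.\ the one closest to the circulation $\alpha$, which realizes $\mu_0$. For all other frequencies the gap $1+|l+\alpha|-\gamma$ only grows as $|l|\to\infty$, whence the constants stay bounded (indeed improve) and the supremum is finite, attained at the critical frequency. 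Once this uniform bound is in hand, the final estimate follows with $c=C$, completing the deduction of Corollary \ref{cor:1} from Theorem \ref{thlocsmooth}.
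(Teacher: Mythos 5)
Your proposal is correct and takes essentially the same route as the paper, which treats the corollary as an immediate consequence of Theorem \ref{thlocsmooth}: one decomposes along the partial waves, applies the single-frequency estimate, and sums using orthogonality, the uniform bound on the constants being exactly the monotonicity in $|l+\alpha|$ that the paper reads off from the explicit formula \eqref{constant}, so that the supremum over $l$ is attained at the frequency closest to $\alpha$ and is finite precisely because $\gamma<1+\mu_0$. Your only slip is cosmetic: the angular sectors onto which you project are the spaces $h_l$ of Proposition \ref{thaller}, not $\mathcal{H}_{\geq\overline{l}}$ (which denotes the union of sectors with $|l|\geq|\overline{l}|$).
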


\begin{remark}
Notice that estimate \eqref{estestest} for $\gamma=1$ fails in the free case (as it fails the corresponding one for the wave equation); on the other hand, as it is often the case, the presence of the Aharanov-Bohm magnetic field improves the range of the admissible exponents and allows us to include it, since $\mu_0\neq0$, as soon as $\alpha\notin\Z$. This is a kind of diamagnetic effect, which is in general not expected in the relativistic setting, but which is known to be possible (see \cite{AS}).  
\end{remark}

\begin{remark}
It is interesting to compare the range of the admissible exponents \eqref{range} with its analogous for the local smoothing estimates for the Dirac-Coulomb model, as given in Theorem 1.1 in \cite{cacser}. In that case indeed, for generic initial data, the wideness of the range is a decreasing function of the (modulus of the) charge, meaning that the bigger the charge, the smaller the admissible range is, and it shrinks to the empty set as the charge tends to $1$, which we recall to be the threshold value for the charge in order to define a self-adjoint operator. Here instead, the range is in fact wider than the one of the free case. Also, estimate \eqref{locendec} could not (yet) be proved for Dirac-Coulomb: the main ingredient in our argument is indeed, as it will be clear, an estimate for the radial components of the generalized eigenstates $\chi_l(r)$ of the operator, namely
\begin{equation}\label{crucialprop}
\frac1R\int_0^R \chi_l(r)^2 rdr<C
\end{equation}
uniform in $R$ and $l$. In the contest of the Dirac equation with Aharonov Bohm fields, the functions $\chi_l(r)$ are nothing but standard Bessel functions $J_{\lambda}(r)$; property \eqref{crucialprop} with this choice has been originally proved in \cite{strichartz} for integers $k$ and then for generic $k$ in \cite{boh}.  It should also be noticed the fact that the stronger estimate
$$
\displaystyle
\sup_{r,\nu}|\sqrt{r}J_\lambda(r)|\leq C
$$
has been misproved in \cite{landbes}, where in particular the author shows that $\displaystyle\sup_{r>0} |\sqrt{r}J_\lambda(r)|$ strictly increases to infinity as $\lambda$ increases from $1/2$ to infinity. The case of Coulomb potential is more difficult, and this is due to the more complicated structure of generalized eigenstates, which involve confluent hypergeometric functions: the proof of estimate \eqref{crucialprop} in this setting will be object of forthcoming work.
\end{remark}

\begin{remark}
The problem, natural, of including a mass term in equation \eqref{diraceq} and thus in estimate \eqref{est} looks to be non trivial, as in our proof we strongly rely on scaling properties of the operator. A possible solution might be to exploit the Kato-Smoothing theory and then rely on the machinery developed in \cite{dancona}, and succesfully applied for Schr\"odinger flows in Aharonov-Bohm fields in \cite{cacfan}, to go from wave to Klein-Gordon smoothing estimates; nevertheless, the argument needs to be adapted to the delicate Dirac structure. This will be topic of forthcoming works.
\end{remark}

\begin{remark}
By combining this result with \cite{cacser}, in which an analogous estimate is proved for the massless Dirac-Coulomb system, it is possible to give an all-comprehensive estimate for an electromagnetic dynamic with Coulomb electric and Aharonov-Bohm magnetic potential. We mention that the explicit structure of the generalized eigenstates for this model can be found e.g. in \cite{khallee}.
\end{remark}

Next, we are able to prove the following local in time estimate, which in the contest of the wave equation is known as \emph{KSS estimate} (see \cite{kss}).
\begin{theorem}\label{Morawetz and KSS estimates}\label{ksstheo}
Let $\alpha\in\mathbb{R}$, $\mu\leq0$ and $u(t,x)$ be a solution of \eqref{diraceq}. For any $f\in L^2$ the following estimate is satisfied
\begin{equation}\label{kss}
\|\langle x\rangle^\mu \sol\|_{L^2_TL^2_x}\lesssim A_\mu(T)\|f\|_{L^2}
\end{equation}
where
$$
A_\mu(T)=
\begin{cases}
T^{1/2-\mu},\quad {\rm if}\: -1/2<\mu\leq 0,\\
(\log((2+T))^{1/2},\quad  {\rm if}\: \mu=-1/2,\\
C \quad  {\rm if}\: \mu<-1/2.
\end{cases}
$$
where $C$ is an absolute constant.
Moreover, as a consequence of \eqref{locendec}, for $-1/2<\mu\leq 0$ we have
\begin{equation}\label{kss2}
\| |x|^\mu \sol \|_{L^2_TL^2_x}\lesssim T^{1/2+\mu}\| f\|_{L^2}.
\end{equation}
\end{theorem}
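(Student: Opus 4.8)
The plan is to deduce both estimates from two elementary facts about the flow $\sol$: the conservation of the $L^2$ norm, $\|\sol\|_{L^2_x}=\|f\|_{L^2}$ for every $t$ (the operator $\mathcal{D}_A$ being self-adjoint, so that $e^{it\mathcal{D}_A}$ is unitary), and the global-in-time local smoothing bound \eqref{locendec}, which after rescaling reads $\|\sol\|_{L^2_tL^2_{|x|\leq R}}\lesssim R^{1/2}\|f\|_{L^2}$ uniformly in $R>0$. Integrating the conservation law over $[0,T]$ gives the complementary bound $\|\sol\|_{L^2_TL^2_x}\leq T^{1/2}\|f\|_{L^2}$. The whole point is that on a fixed region $\{|x|\leq R\}$ the first ingredient is spatially local but global in time, while the second is global in space but only linear in $T$; combining them on dyadic shells produces exactly the three regimes of $A_\mu(T)$.

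Concretely, I would decompose space into the ball $\{|x|\leq 1\}$ and the dyadic annuli $A_j=\{2^j\leq |x|<2^{j+1}\}$, $j\geq0$. On $A_j$ the weight satisfies $\xx^{\mu}\approx 2^{j\mu}$, while restricting \eqref{locendec} to $[0,T]$ and comparing with the conservation bound yields
\begin{equation*}
\|\sol\|_{L^2_TL^2_{A_j}}\lesssim \min\!\left(2^{j/2},\,T^{1/2}\right)\|f\|_{L^2}.
\end{equation*}
Summing the (disjoint) contributions in $L^2$,
\begin{equation*}
\|\xx^{\mu}\sol\|_{L^2_TL^2_{|x|\geq1}}^2\lesssim \|f\|_{L^2}^2\sum_{j\geq0}2^{2j\mu}\min\!\left(2^{j},\,T\right),
\end{equation*}
the ball $\{|x|\leq1\}$ contributing only $O(\|f\|_{L^2}^2)$ via \eqref{locendec} with $R=1$. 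Splitting the sum at the light cone $2^j\sim T$ is the heart of the matter: for $\mu<-1/2$ the series converges uniformly in $T$ (giving the constant $C$), for $\mu=-1/2$ the truncation at $j\sim\log_2 T$ produces the factor $(\log(2+T))^{1/2}$, and for $-1/2<\mu\leq0$ the geometric series is governed by its value at the crossover, yielding the polynomial factor $A_\mu(T)$ in \eqref{kss}.

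For the homogeneous-weight refinement \eqref{kss2} the far-field analysis is identical, since $|x|^\mu\approx\xx^\mu$ for $|x|\gtrsim1$, and the crossover produces the factor $T^{1/2+\mu}$. The only new point is the singular behaviour of $|x|^\mu$ near the origin: I would run a second, \emph{inward} dyadic decomposition into the shells $\{2^{-k-1}\leq|x|<2^{-k}\}$, $k\geq0$, on which $|x|^{\mu}\approx 2^{-k\mu}$ and \eqref{locendec} gives $\|\sol\|_{L^2_tL^2_{|x|<2^{-k}}}\lesssim 2^{-k/2}\|f\|_{L^2}$. The resulting series $\sum_{k\geq0}2^{-2k\mu}2^{-k}=\sum_{k\geq0}2^{-k(2\mu+1)}$ converges \emph{precisely} when $2\mu+1>0$, i.e. $\mu>-1/2$, matching the stated range, and contributes a $T$-independent constant; together with the far-field term this gives \eqref{kss2}.

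I expect the main obstacle to be the bookkeeping at the two critical thresholds. The endpoint $\mu=-1/2$ is genuinely borderline: the far-field geometric series degenerates into a sum of equal terms and must be truncated at the light cone to extract the sharp $(\log(2+T))^{1/2}$ rather than a power of $T$, so the balance between \eqref{locendec} and the conservation bound has to be used quantitatively rather than qualitatively. Symmetrically, $\mu=-1/2$ is exactly the divergent case of the inward sum in \eqref{kss2}, which is why the homogeneous weight forces the strict inequality $\mu>-1/2$. Checking that the constants hidden in the relations $\xx^\mu\approx2^{j\mu}$ and $|x|^\mu\approx2^{-k\mu}$ on each shell are uniform in $j$, $k$ and $T$ — so that the two dyadic summations are legitimate — is the one place where some care is genuinely required.
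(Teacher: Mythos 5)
Your two ingredients (unitarity of $e^{it\mathcal{D}_A}$ and the time-global smoothing bound \eqref{locendec}) and the dyadic-shell scheme for \eqref{kss} are exactly the paper's; note also that your crossover computation yields $T^{1/2+\mu}$, which is the \emph{intended} exponent — the $T^{1/2-\mu}$ in the statement is evidently a typo, as both the paper's own estimate on $\{|x|\geq T\}$ and the scaling identity leading to \eqref{kss2} produce $T^{1/2+\mu}$. However, two steps in your write-up fail as stated. First, at the endpoint $\mu=0$ your far-field sum $\sum_{j\geq0}2^{2j\mu}\min(2^j,T)=\sum_j\min(2^j,T)$ diverges: the per-shell energy bound $\|\sol\|_{L^2_TL^2_{A_j}}\lesssim T^{1/2}\|f\|_{L^2}$ is not summable over shells, so beyond the light cone you must use energy on the region $\{|x|\geq T\}$ \emph{in one piece} (pulling out the factor $T^\mu$, as the paper does), not shell by shell. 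Second, and more substantively, you estimate the unit ball by the time-global \eqref{locendec}, which contributes an additive $T$-independent constant; for $-1/2<\mu\leq0$ the right-hand side $T^{1/2+\mu}\|f\|_{L^2}$ tends to $0$ as $T\to0$, so a constant is not admissible. The fix is to keep the $\min$ with the energy bound on the bounded region as well, i.e.\ $\min(1,T^{1/2})$ on the unit ball — equivalently, dispatch $T\leq1$ entirely by the energy estimate, which is precisely how the paper organizes its proof.

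For \eqref{kss2} your route genuinely differs from the paper's: the paper does not redo any decomposition but deduces \eqref{kss2} from \eqref{kss} by rescaling the data, $f_\lambda(x)=\lambda f(\lambda x)$, changing variables, and letting $\lambda\to0$ with dominated convergence, exploiting that $(\lambda^2+|y|^2)^{\mu}\to|y|^{2\mu}$. Your direct inward dyadic decomposition can be made to work, but as written it has the same small-$T$ gap in amplified form: you say the origin "contributes a $T$-independent constant" via $\sum_{k\geq0}2^{-k(2\mu+1)}$, and a constant is \emph{not} $\lesssim T^{1/2+\mu}$ as $T\to0$. Repair it either by retaining $\min(2^{-k},T)$ on the inner shells too, which gives
\begin{equation*}
\sum_{k\geq0}2^{-2k\mu}\min\bigl(2^{-k},T\bigr)\approx T^{1+2\mu}\qquad(T\leq1,\ -1/2<\mu<0),
\end{equation*}
or by observing that \eqref{kss2} is scale-invariant, so it suffices to prove it at $T=1$ — which your argument does establish. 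Comparing the two routes: the paper's scaling argument is shorter and uniform in $T$ for free, while your direct proof has the merit of exposing exactly where the restriction $\mu>-1/2$ enters (convergence of the inward series $\sum_k 2^{-k(2\mu+1)}$), a threshold the scaling proof inherits invisibly from \eqref{kss}.
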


A natural application of Theorem \eqref{thlocsmooth} would be proving Strichartz estimates, which have been a relevant research topic in the last decades, due to their applications to nonlinear problems. The Aharonov-Bohm potential is known to be critical for their validity in this setting (see \cite{AFG} and also \cite{FG, GVV} for the non-relativistic counterpart). In order to apply a standard perturbation argument based on the Duhamel formula, one would need to use estimate \eqref{est} with $\gamma=1/2$, and this estimate seems to fail (surely we are not able to include it in our proof), as it fails in the free case. Anyway, as an application of Theorems \ref{thlocsmooth} and \ref{ksstheo}, it is still possible to prove some weighted Strichartz estimates. In the following, we use the polar coordinates $x=r\omega$, $r\geq0$, $\omega\in\mathbb S^1$, and given a measurable function $F=F(t,x):\R\times\R^2\to\C$ we denote by
$$
\|F\|_{L^q_tL^q_{rdr}L^2_\omega}
:=
\int_{-\infty}^{+\infty}\left(\int_{0}^{+\infty}\left(\int_{\mathbb S^1}|F(t,r,\omega)|^2\,d\sigma\right)^{q}\,rdr\right)\,dt,
$$
being $d\sigma$ the surface measure on the sphere. Then we can prove the following

\begin{theorem}\label{strichartz}
Let $\alpha\in\mathbb{R}$ and $u(t,x)$ be a solution of \eqref{diraceq}. For any $f\in L^2$ the following estimates are satisfied
\begin{equation}\label{weight}
\|r^{\frac12-\varepsilon-\frac2q}u\|_{L^q_tL^q_{rdr}L^2_\omega}\leq
C
\| \mathcal{D}_A^{\frac12+\varepsilon-\frac{1}{q}}\Lambda_\omega^{-\varepsilon+\frac\varepsilon{q}}f\|_{L^2},\qquad q\in[2,\infty]
\end{equation}
for some $C>0$, where $\Lambda_\omega=\sqrt{1-\Delta_\omega}$ and $\Delta_\omega$ is the Laplace-Beltrami operator on $S^1$.
\end{theorem}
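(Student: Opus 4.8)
The plan is to diagonalize $\mathcal{D}_A$ through the partial wave decomposition of Proposition \ref{thaller}, reduce \eqref{weight} to a single--channel radial estimate carrying an explicit dependence on $l$, and obtain the latter by interpolating two endpoint bounds for the resulting Hankel oscillatory integral. Concretely, I would expand $f=\sum_l f_l\otimes\Theta_l$ in the generalized spinor harmonics $\Theta_l$, so that $u=\sum_l(e^{it\parmag}f_l)\otimes\Theta_l$ and $\Lambda_\omega^{-\beta}$ acts on the $l$--th channel as multiplication by a factor comparable to $\bra{l}^{-\beta}$. By orthogonality in $\omega$ one has $\|u(t,r,\cdot)\|_{L^2_\omega}^2\simeq\sum_l|u_l(t,r)|^2$ and $\|\mathcal{D}_A^{s}\Lambda_\omega^{-\beta}f\|_{L^2}^2\simeq\sum_l\bra{l}^{-2\beta}\|\parmag^{s}f_l\|_{L^2_{rdr}}^2$; since $q\geq2$, Minkowski's inequality moves the $\ell^2_l$ summation outside the $L^q_tL^q_{rdr}$ norm, so it suffices to prove, uniformly in $l$,
\begin{equation*}
\|r^{\frac12-\varepsilon-\frac2q}\,e^{it\parmag}f_l\|_{L^q_tL^q_{rdr}}\leq C\,\bra{l}^{-\varepsilon+\frac\varepsilon q}\,\|\parmag^{\frac12+\varepsilon-\frac1q}f_l\|_{L^2_{rdr}},
\end{equation*}
and then reassemble by Cauchy--Schwarz in $l$.

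For the single--channel estimate I would use the representation that Proposition \ref{thaller} provides via the Hankel transform diagonalizing $\parmag$: writing $h_l$ for the transform of $f_l$, the solution takes the schematic form $u_l(t,r)=\int_0^\infty e^{\pm itk}J_{\lambda_l}(rk)\,h_l(k)\,k\,dk$, with the two energy branches $e^{\pm itk}$ and the two Bessel orders $\lambda_l=|l+\alpha|$ and $\lambda_l\pm1$ forced by the spinor structure, and with $\parmag^{s}$ acting as multiplication by $k^{s}$. Both endpoints then reduce, after the rescaling $\rho=rk$, to the Weber--Schafheitlin integral $I_\lambda:=\int_0^\infty\rho^{-2\varepsilon}J_\lambda(\rho)^2\,d\rho$. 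At $q=2$, Plancherel in $t$ and Fubini give $\|r^{-\frac12-\varepsilon}u_l\|_{L^2_{t,rdr}}^2\simeq I_{\lambda_l}\,\|\parmag^{\varepsilon}f_l\|_{L^2}^2$, which is a refinement of the local smoothing estimate \eqref{est} now carrying an angular gain; at $q=\infty$, Cauchy--Schwarz in $k$ --- uniform in $t$ since $|e^{\pm itk}|=1$ --- gives $\sup_{t,r}r^{\frac12-\varepsilon}|u_l(t,r)|\leq I_{\lambda_l}^{1/2}\,\|\parmag^{\frac12+\varepsilon}f_l\|_{L^2}$. In both computations the powers of $r$ cancel identically, and since $I_\lambda$ equals an explicit ratio of Gamma functions (see \cite{erd}), Stirling yields $I_\lambda\simeq\lambda^{-2\varepsilon}$ as $\lambda\to\infty$ --- precisely the angular gain $\bra{l}^{-\varepsilon}$.

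With the two endpoints in hand, I would close the argument by a Stein analytic interpolation in $q$, letting the $r$--weight and the power of $\parmag$ vary affinely between the endpoints. A direct check shows that the interpolated exponents are exactly the weight $r^{\frac12-\varepsilon-\frac2q}$ and the power $\parmag^{\frac12+\varepsilon-\frac1q}$ of the statement, while interpolating the angular gains produces $\bra{l}^{-\varepsilon+\frac\varepsilon q}$ (the endpoint bounds in fact carry the stronger gain $\bra{l}^{-\varepsilon}$, so the stated exponent follows a fortiori by interpolating, at the $q=2$ end, against its halved gain $\bra{l}^{-\varepsilon/2}$). Summing the single--channel estimates over $l$ as above then gives \eqref{weight}.

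The hard part will be the $q=\infty$ endpoint, and more precisely the \emph{sharp} dependence on $\lambda$. As recorded in the remark following Theorem \ref{thlocsmooth}, $\sup_r\sqrt r\,J_\lambda(r)$ grows with $\lambda$, so no pointwise bound on $\sqrt r\,J_\lambda$ is available; the decay $\lambda^{-\varepsilon}$ must instead be extracted from the exact cancellation encoded in $I_\lambda$ and its Gamma asymptotics, and it is exactly this loss in the pointwise size of Bessel functions that forces the angular smoothing operator $\Lambda_\omega$ into \eqref{weight}. A secondary technical point is that one must track the two Bessel orders $\lambda_l$ and $\lambda_l\pm1$ created by the Dirac spinor structure, verifying that each contributes the same $\lambda^{-2\varepsilon}$ asymptotics and that the integrals $I_\lambda$ converge --- this last requirement confines the argument to $\varepsilon>0$ and dictates the admissible range of parameters.
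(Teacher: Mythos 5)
Your proposal is correct and shares the paper's overall skeleton---complex interpolation between the $q=2$ local smoothing endpoint and a $q=\infty$ trace-type endpoint---but the execution is genuinely different and, as it happens, more careful than the paper's. The paper disposes of the theorem in one line: interpolate \eqref{est} (with $\gamma=\frac12+\varepsilon$) against the \emph{free} two-dimensional trace inequality $\sup_{r>0}r^{\frac12-\varepsilon}\|f(r\omega)\|_{L^2_\omega}\leq C\||D|^{\frac12+\varepsilon}\Lambda_\omega^{-\varepsilon}f\|_{L^2}$, transferred to powers of $\mathcal{D}_A$ via Lemma \ref{confnorm}. You instead work channel by channel and derive \emph{both} endpoints directly from the spectral representation, reducing each---after the rescaling $\rho=rk$, with the $r$-powers cancelling exactly as you computed---to the Weber--Schafheitlin integral $I_\lambda=\int_0^\infty\rho^{-2\varepsilon}J_\lambda(\rho)^2\,d\rho$, whose Gamma-ratio evaluation and Stirling asymptotics $I_\lambda\simeq\lambda^{-2\varepsilon}$ supply the angular gain; this bypasses Lemma \ref{confnorm} and the free trace inequality entirely. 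Your route buys something concrete: interpolating the paper's two endpoints literally, with \emph{no} angular gain at $q=2$ (none is stated in \eqref{est}), yields only $\Lambda_\omega^{-\varepsilon+\frac{2\varepsilon}{q}}$, which at $q=2$ has exponent $0$; the exponent $-\varepsilon+\frac{\varepsilon}{q}$ claimed in \eqref{weight} equals $-\frac{\varepsilon}{2}$ at $q=2$ and therefore \emph{requires} an angular gain at the smoothing endpoint. Your observation that the channel-wise smoothing constant---in effect the paper's \eqref{constant} with $\gamma=\frac12+\varepsilon$, which decays like $\langle l+\alpha\rangle^{-\varepsilon}$ at the level of norms---carries exactly this gain is what legitimizes the statement as written, and your endpoint computations check out: Cauchy--Schwarz in the spectral variable with the splitting $k^{\frac12-\varepsilon}\cdot k^{\frac12+\varepsilon}$ gives the $q=\infty$ bound, Plancherel in $t$ (the positive and negative energy branches land on disjoint supports) gives the $q=2$ bound, and both Bessel orders $|l+\alpha|$, $|l+1+\alpha|$ contribute the same asymptotics. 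Two minor caveats you should record: when $\alpha\in\mathbb{Z}$ the lowest order is $\lambda=0$ and convergence of $I_\lambda$ near $\rho=0$ forces $0<\varepsilon<\frac12$ (consistent with the implicit smallness of $\varepsilon$ in the statement); and $\Lambda_\omega^{-\beta}$ is not an exact scalar on a channel, since the two spinor components carry $\langle l\rangle^{-\beta}$ and $\langle l+1\rangle^{-\beta}$ respectively---harmless, as these are uniformly comparable, which is what your word ``comparable'' already concedes.
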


\begin{remark}
In this result we have used the fact that
$$
\| f\|_{\dot{H}^s}\leq C_1\|\mathcal{D}_A f\|_{L^2}
$$
for $s\in(0,1)$ (notice that fractional powers of $\mathcal{D}_A$ commute with the flow of \eqref{diraceq}). This will be proved in forthcoming Lemma \ref{confnorm}.
\end{remark}

\begin{remark}
By interpolating estimate \eqref{kss} with the endpoint trace lemma (see e.g. \cite{fangwang}), it is possible to obtain a family of local in time Strichartz estimates; anyway this would need a much careful insight in order to proper defined Besov spaces in this contest, and we prefer not to deal with this problem here.
\end{remark}

The proof of estimate \eqref{weight} only requires to interpolate between estimate \eqref{est} and the 2D Sobolev inequality 
$$
\sup_{r>0} r^{\frac12-\varepsilon}\|f(r\omega)\|_{L^2_\omega}\leq C\| |D|^{\frac12+\varepsilon}\Lambda_\omega^{-\varepsilon}f\|_{L^2_x}
\leq C\| \mathcal{D}_A^{\frac12+\varepsilon}\Lambda_\omega^{-\varepsilon}f\|_{L^2_x}.
$$


\section{The setup: spectral theory and integral transform.}

In this section we build the necessary setup needed to prove our main result.

\subsection{Spectral theory of the Dirac operator in Aharonov-Bohm field}
We devote this subsection to briefly review the spectral theory of the Dirac Hamiltonian in an Aharonov-Bohm field, as the explicit form of the generalized eigenstates will play a crucial role in what follows; for further details we refer to \cite{gerb}. 

First of all, we recall the following classical result, which can be found in \cite{thaller} (we will limit ourselves to the massless case as it is the one we need, but the theory can be extended to positive mass as well).
\begin{proposition}\label{thaller}
We can define a unitary isomorphism between Hilbert spaces 
$$L^2(\mathbb{R}^2)^{2} \cong L^2((0,\infty),dr)\bigotimes_{l\in\mathbb{Z}} h_l$$
by means of the following decomposition
\begin{equation}\label{iso2}
\Phi(x)=\sum_{l\in\mathbb{Z}}\frac{1}{2\sqrt{\pi}}\left(\begin{array}{cc} f_l(r)\\
 g_l(r) e^{i\phi}
\end{array}\right)e^{il\phi},
\end{equation}
which holds for any $\Phi\in L^2(\mathbb{R}^2,\mathbb{C}^2)$.
Moreover, the operator $\mathcal{D}^0_A$ defined in \eqref{ham} leaves invariant the partial wave subspaces $C^\infty_0((0,\infty))\otimes h_{l}$ and, with respect to the basis $\left\{e^{il\phi},e^{i(l+1)\phi}\right\}$ is represented by the matrix
\begin{equation}\label{partham}
\displaystyle
\parmag=\left(\begin{array}{cc}0 &-i\left(\partial_r+\frac{l+\alpha+1}r\right) \\-i\left(\partial_r-\frac{l+\alpha}r\right) & 0\end{array}\right).
\end{equation}
The operator  $\mathcal{D}^0_A$ on $C^\infty_0(\mathbb{R}^2)^2$ is unitary equivalent to the direct sum of $\parmag$ that is
\begin{equation*}
\mathcal{D}_A^0\cong \bigoplus_{l\in\mathbb{Z}}\parmag.
\end{equation*}
For fixed $\overline{l}\in \mathbb{Z}$ we denote with
\begin{equation}\label{HH}
\mathcal{H}_{\geq \overline{l}}=\bigoplus_{l:|l|\geq|\overline{l}|}h_{l}.
\end{equation}
\end{proposition}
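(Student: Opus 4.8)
The plan is to treat this as a direct computation: the content of the proposition is that $\mathcal{D}^0_A$ commutes with the total angular momentum and is therefore block-diagonal in the angular Fourier basis. First I would set up the unitary identification. In polar coordinates one has $L^2(\R^2)\cong L^2((0,\infty),r\,dr)\otimes L^2(\mathbb{S}^1)$, and $\{e^{il\phi}\}_{l\in\Z}$ is a complete orthogonal system on $L^2(\mathbb{S}^1)$. Expanding a spinor $\Phi\in L^2(\R^2)^2$ by putting the upper component against $e^{il\phi}$ and the lower against $e^{i(l+1)\phi}$ yields exactly the ansatz \eqref{iso2}; the verification that the normalisation $\tfrac{1}{2\sqrt\pi}$ together with $\int_0^{2\pi}|e^{il\phi}|^2\,d\phi=2\pi$ turns this expansion into an isometry onto the claimed orthogonal sum is then a routine orthogonality computation (one reduces the measure $r\,dr$ to $dr$ by the usual substitution).

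The heart of the argument is the explicit reduction of the operator. Setting $m=0$ in \eqref{ham} gives $\mathcal{D}^0_A=\left(\begin{smallmatrix}0&D^*\\ D&0\end{smallmatrix}\right)$, so everything reduces to rewriting the scalar operators $D=(p_1+A^1)+i(p_2+A^2)$ and $D^*=(p_1+A^1)-i(p_2+A^2)$ in polar coordinates. Substituting $\partial_1=\cos\phi\,\partial_r-\tfrac{\sin\phi}{r}\partial_\phi$, $\partial_2=\sin\phi\,\partial_r+\tfrac{\cos\phi}{r}\partial_\phi$ together with $A^1=-\alpha\tfrac{\sin\phi}{r}$ and $A^2=\alpha\tfrac{\cos\phi}{r}$, the trigonometric coefficients recombine into the ``ladder'' operators
\begin{equation*}
D=-ie^{i\phi}\Big(\partial_r+\tfrac{i}{r}\partial_\phi-\tfrac{\alpha}{r}\Big),\qquad
D^*=-ie^{-i\phi}\Big(\partial_r-\tfrac{i}{r}\partial_\phi+\tfrac{\alpha}{r}\Big),
\end{equation*}
where I have used the convention $p_j=-i\partial_j$. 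The prefactors $e^{\pm i\phi}$ already show that $D$ raises and $D^*$ lowers the angular index by one, which is precisely why the two-dimensional angular block spanned by $\{e^{il\phi},e^{i(l+1)\phi}\}$, tensored with radial functions, is left invariant.

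To read off \eqref{partham} I would then apply these operators to the building blocks of \eqref{iso2}. Using $\partial_\phi e^{ik\phi}=ik\,e^{ik\phi}$ one finds
\[
D\big[f_l(r)e^{il\phi}\big]=-i\Big(\partial_r-\tfrac{l+\alpha}{r}\Big)f_l\cdot e^{i(l+1)\phi},\qquad
D^*\big[g_l(r)e^{i(l+1)\phi}\big]=-i\Big(\partial_r+\tfrac{l+\alpha+1}{r}\Big)g_l\cdot e^{il\phi},
\]
which are exactly the bottom-left and top-right entries of $\parmag$ acting on $(f_l,g_l)$. Since the partial-wave subspaces are mutually orthogonal, invariant, and their algebraic sum is dense (indeed a core, on $C^\infty_0(\R^2\setminus\{0\})^2$), the unitary equivalence $\mathcal{D}^0_A\cong\bigoplus_{l}\parmag$ follows, and the spaces $\mathcal{H}_{\geq\overline l}$ are then a matter of definition.

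I expect the only genuinely delicate points to be bookkeeping. First, the signs: the factor $i$ in \eqref{ham} forces the convention $p_j=-i\partial_j$ in order for the prefactor of the off-diagonal entries and the sign of $\alpha$ to come out as in \eqref{partham} (the opposite convention merely swaps $D\leftrightarrow D^*$ and $\alpha\leftrightarrow-\alpha$), so one must be consistent throughout. Second, the measure: the coefficients $\tfrac{l+\alpha+1}{r}$ and $\tfrac{l+\alpha}{r}$ are those adapted to $L^2(r\,dr)$, so if one really wants the $L^2(dr)$ normalisation advertised in the isomorphism, the conjugation by $\sqrt{r}$ must be performed, shifting each $1/r$ term by $\pm\tfrac{1}{2r}$. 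Neither point is conceptually hard, but both must be tracked carefully for the explicit forms \eqref{iso2}--\eqref{partham} to match on the nose; the separate question of which self-adjoint realisation one attaches to each $\parmag$ (and hence the role of the range $-1<l+\alpha<0$) is exactly the issue deferred to the self-adjointness discussion, and is not needed for the algebraic statement proved here.
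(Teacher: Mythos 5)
Your proposal is correct, but it is worth noting that the paper does not prove this proposition at all: it is quoted as a classical result with a citation to \cite{thaller}, so your direct computation actually supplies a self-contained proof where the paper relies on reference. Your route --- passing to polar coordinates, recombining $D$ and $D^*$ into the ladder operators $-ie^{\pm i\phi}\bigl(\partial_r\pm\tfrac{i}{r}\partial_\phi\mp\tfrac{\alpha}{r}\bigr)$, and reading off the matrix entries from $D[f_le^{il\phi}]$ and $D^*[g_le^{i(l+1)\phi}]$ --- is the standard verification, and your two ``bookkeeping'' caveats are both genuine features of the paper's statement rather than artifacts of your argument. First, the paper literally writes $p_j=i\partial_j$ in \eqref{ham}, and as you observe one must use $p_j=-i\partial_j$ (or equivalently swap $D\leftrightarrow D^*$ and the sign of $\alpha$) for the signs in \eqref{partham} to come out as stated; your computed entries $-i\bigl(\partial_r-\tfrac{l+\alpha}{r}\bigr)$ and $-i\bigl(\partial_r+\tfrac{l+\alpha+1}{r}\bigr)$ do match \eqref{partham} exactly, and are consistent with the Bessel recurrences underlying the eigenfunctions \eqref{contspec}. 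Second, the $dr$ versus $r\,dr$ tension you flag is real: \eqref{partham} as written is the operator adapted to $L^2(r\,dr)$ (this is the form used consistently later, e.g.\ in the transform $\mathcal{P}_l$ and the normalization \eqref{normaliz}), whereas the isomorphism in the statement advertises $L^2((0,\infty),dr)$, which would require the $\sqrt{r}$-conjugation shifting each $1/r$ coefficient by $\mp\tfrac{1}{2r}$; recognizing that discrepancy, instead of silently reproducing it, strengthens your write-up. The one point stated slightly loosely is the unitarity of \eqref{iso2}: with the prefactor $\tfrac{1}{2\sqrt\pi}$ and $\int_0^{2\pi}|e^{il\phi}|^2d\phi=2\pi$ the expansion is an isometry only up to a fixed constant absorbed into the convention for the norms on the $h_l$, but this is a harmless normalization issue, not a gap.
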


Therefore, by defining for positive energies $E>0$
\begin{equation}\label{positen}
\Psi_{E,l}(t,r,\phi)\cong\left(\begin{array}{cc}f_{l,E}(r)\\g_{l,E}(r)e^{i\phi}\end{array}\right)e^{il\phi}e^{-iEt},
\end{equation}
the radial eigenvalue problem for a fixed value of $l\in\mathbb{Z}$ reads as 
\begin{equation}\label{eqspec}
\parmag\chi_{l,E}(r)=E\chi_{l,E}(r),
\end{equation} 
which gives the solution
\begin{equation}\label{contspec}
\chi_{l,E}(r)=\left(\begin{array}{cc}f_{l,E}(r)\\g_{l,E}(r)\end{array}\right)=
\displaystyle
\sqrt\frac\pi2\left(\begin{array}{cc}(\epsilon_l)^lJ_{|l+\alpha|}(Er) \\ i(\epsilon_l)^{l+1}J_{|l+1+\alpha|}(Er)\end{array}\right)
\end{equation}
with
$$
\epsilon_l=\begin{cases}
1\qquad {\rm if}\:l+\alpha\geq0\\
-1\quad\:{\rm if}\:l+\alpha<0.
\end{cases}
$$

Direct calculations show that the generalized eigenfunctions for negative values of the energy can be written as
\begin{equation}\label{negen}
\chi_{l,-E}(r)=\overline{\chi_{l,E}}=\sqrt\frac\pi2\left(\begin{array}{cc}(\epsilon_l)^lJ_{|l+\alpha|}(|E|r) \\ -i(\epsilon_l)^{l+1}J_{|l+1+\alpha|}(|E|r)\end{array}\right),
\end{equation}
so that in particular
$$
f_{l,-E}(r)=f_{l,E}(r),\qquad g_{l,-E}(r)=-g_{l,E}(r).
$$
Notice that the wave functions above can be normalized by the condition
\begin{equation}\label{normaliz}
\int \chi^*_{l,E}(x)\chi_{l',E'}(x)dx^2=2\pi \delta_{l,l'}\frac{\delta(E-E')}E.
\end{equation}
\begin{remark}\label{selfadj}
Let us briefly comment on the formulas above. System \eqref{eqspec} admits indeed, for a fixed value of $l$, another family of solutions that can be obtained by replacing the couple $(J_{|l+\alpha|}, J_{|l+\alpha+1|})$ by $(J_{-|l+\alpha|},J_{-|l+\alpha+1|})$ in \eqref{contspec}. On the other hand, this second couple can not be considered, as long as $|l+\alpha|>1$, as generalized eigenstates are required to be square-integrable in the origin; we recall indeed the well known asymptotic behavior of Bessel functions in the origin given by
$$\lim_{x\rightarrow 0}J_\nu(x)\cong\frac{x^\nu}{2^\nu\Gamma(1+\nu)}.$$
General theory therefore ensures that the operators $\parmag$ are essentially selfadjoint as long as $|l+\alpha|>1$.
When $-1<l+\alpha<0$, the situation becomes more delicate: both the couples $(J_{\pm|l+\alpha|},J_{\pm|l+\alpha+1|})$ satisfy indeed the condition of square integrability in the origin. This is enough to guarantee that the corresponding operator $\parmag$ is \emph{not} essentially selfadjoint (recall that a corollary of the Theorem proven  in \cite{weidmann} states that,  for the partial Dirac Hamiltonian to be
essentially self-adjoint, it is necessary and sufficient that a non-square-integrable at $r\rightarrow 0$ solution exists). On the other hand, by taking advantage of the classical Von Neumann deficiency indices theory, it is possible to show that the operator $\parmag$ in the "critical case" $-1<l+\alpha<0$ admits a one-parameter family of self adjoint extensions; to fix one, suitable futher boundary conditions on the eigenfunctions need to be imposed. We refer to \cite{sitenko}, \cite{gerb} and \cite{falomir} for a detailed analysis of the topic.
\end{remark}

\subsection{The integral transform}
We now introduce the crucial integral transform that will be used in the proof of the main result, that essentially consists in a projection on the continuous spectrum. Throughout this subsection, we are fixing a value of $l\in\mathbb{Z}$ and working only on radial functions.

\begin{definition}\label{hanktras}
Let $\varphi(r)=(\varphi_1(r),\varphi_2(r))\in L^2((0,\infty),r dr)^2$.  We define, for $l\in\mathbb{Z}$, the following integral transform
\begin{equation}\label{H}
\mathcal{P}_l\varphi(E)=
\left(\begin{array}{cc}\mathcal{P}^+_l\varphi(E)\\
\mathcal{P}^-_l\varphi(E)
\end{array}\right)=
\left(\begin{array}{cc}\int_0^{+\infty}\chi_{l,E}(r)\varphi(r)rdr\\
\int_0^{+\infty}-\chi_{l,-E}(r)\varphi(r)rdr
\end{array}\right)
\end{equation}
\begin{equation*}
=\int_0^{+\infty}H_{l}(\varepsilon r)\cdot\varphi(r)rdr
\end{equation*}
where we have introduced the matrix
\begin{equation}\label{matra}
H_{l}=\left(\begin{array}{cc}f_{l,E}(r)\;g_{l,E}(r)\\
-f_{l,-E}(r)\;-g_{l,-E}(r).
\end{array}\right)
\end{equation}
\end{definition}

We collect in the following proposition some crucial properties of the operator $\mathcal{P}_l$. \begin{proposition}\label{properties}
For any $\varphi\in L^2((0,\infty),dr)^2$ the following properties hold:
\begin{enumerate}
\item
$\mathcal{P}_l$ is an $L^2$-isometry.
\item
$
\mathcal{P}_l \parmag=\Omega\mathcal{P}_l.
$
\item
The inverse transform of $\mathcal{P}_l$ is given by
\begin{equation}\label{H-1}
\mathcal{P}_l^{-1}\varphi(r)=\int_0^{+\infty}H_{l}^{*}(E r)\cdot\varphi(E)E dE
\end{equation}
where the matrix $H_{l}^{*}$ is the transpose conjugate of $H_l$.
\item
For every $\gamma\in\mathbb{R}$ we can formally define the fractional operators
\begin{equation}\label{fractiondef}
\parmag^\gamma\varphi_l(r)=\mathcal{P}_l\Omega^\gamma\mathcal{P}_l^{-1}\varphi_l(r)=\int_0^{+\infty}S_l^\gamma(r,s)\cdot \varphi_l(s)sds.
\end{equation}
where the integral kernel $S_l(r,s)$ is the $2\times2$ matrix given by
\begin{equation}\label{mattrix}
S_l^\gamma(r,s)=\int_0^{+\infty}H_{l}(E r)\cdot H^*_{l}(E s)E^{1+\gamma}dE
\end{equation}
\end{enumerate}
\end{proposition}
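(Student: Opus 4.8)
The plan is to prove all four statements by reducing the vector-valued transform $\PP$ to a pair of scalar Hankel transforms and then invoking the classical orthogonality and closure relations for Bessel functions. Writing \eqref{H} out componentwise and inserting the explicit eigenstates \eqref{contspec}--\eqref{negen}, each entry of $\PP^{\pm}\varphi$ is a linear combination of the two Hankel transforms $\mathcal{H}_{|l+\alpha|}\varphi_1$ and $\mathcal{H}_{|l+1+\alpha|}\varphi_2$, where $\mathcal{H}_\nu\psi(E)=\int_0^\infty J_\nu(Er)\psi(r)\,r\,dr$; concretely $\varphi_1$ pairs with the order $|l+\alpha|$ and $\varphi_2$ with the order $|l+1+\alpha|$. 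Since $|l+\alpha|,|l+1+\alpha|>-1$ for every $l\in\Z$ and $\alpha\in\R$, these transforms are well defined, and the whole argument rests on the closure relation $\int_0^\infty J_\nu(Er)J_\nu(Es)\,E\,dE=\delta(r-s)/r$ together with its dual orthogonality relation $\int_0^\infty J_\nu(Er)J_\nu(E'r)\,r\,dr=\delta(E-E')/E$, the latter being the radial form of the normalisation \eqref{normaliz}.

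For property (1) I would first record that the particle/antiparticle structure of \eqref{matra}, via $f_{l,-E}=f_{l,E}$ and $g_{l,-E}=-g_{l,E}$, makes $\PP^+\varphi$ and $\PP^-\varphi$ the sum and the difference of the same two Hankel pieces. Hence the cross terms cancel and $|\PP^+\varphi|^2+|\PP^-\varphi|^2$ reduces to a diagonal combination of $|\mathcal{H}_{|l+\alpha|}\varphi_1|^2$ and $|\mathcal{H}_{|l+1+\alpha|}\varphi_2|^2$. Integrating against $E\,dE$ and applying the Hankel--Plancherel identity (the closure relation above) then yields $\|\PP\varphi\|_{L^2(E\,dE)}^2=\|\varphi\|_{L^2(r\,dr)}^2$, the constant $\sqrt{\pi/2}$ in \eqref{contspec} being chosen precisely so that \eqref{normaliz} holds and the final constant equals one.

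Property (2) follows from the eigenvalue equation: for $\varphi\in C_0^\infty((0,\infty))^2$ I insert $\parmag\varphi$ into \eqref{H} and integrate by parts to transfer $\parmag$ onto the kernel; since $\parmag$ is symmetric on $L^2((0,\infty),r\,dr)^2$ and the boundary contributions at $r=0$ and $r=\infty$ vanish (this is where the square-integrability of $\chi_{l,\pm E}$ at the origin, discussed in Remark \ref{selfadj}, enters), the operator is replaced by the spectral parameter from \eqref{eqspec}, giving the intertwining $\PP\parmag=\Omega\PP$. For property (3) I would compute $\PP^{-1}\PP$ directly using \eqref{H-1}: the matrix product $H_l^{\ast}(Er)H_l(Es)$ is \emph{diagonal} — its off-diagonal entries cancel by the same sign relations $f_{l,-E}=f_{l,E}$, $g_{l,-E}=-g_{l,E}$ — with diagonal entries proportional to $J_{|l+\alpha|}(Er)J_{|l+\alpha|}(Es)$ and $J_{|l+1+\alpha|}(Er)J_{|l+1+\alpha|}(Es)$; integrating in $E\,dE$ and using the closure relation produces $\delta(r-s)\,\mathbb{I}_2/r$, whence $\PP^{-1}\PP=\mathrm{Id}$. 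Equivalently, once (1) is known, \eqref{H-1} is simply the adjoint of the isometry $\PP$, hence its inverse. Finally, property (4) is a formal consequence of (2) and (3), which give the spectral representation $\parmag=\PP^{-1}\Omega\PP$: one defines $\parmag^\gamma:=\PP^{-1}\Omega^\gamma\PP$ by functional calculus and composes the kernels of $\PP^{-1}$, $\Omega^\gamma$ and $\PP$ to obtain \eqref{fractiondef} with kernel \eqref{mattrix}.

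I expect the main obstacle to be the rigorous handling of these formal manipulations: justifying the Hankel--Plancherel and closure relations as genuine $L^2$ statements rather than pointwise delta-function identities, and legitimising the interchange of the order of integration in the computation of $\PP^{-1}\PP$. For property (4) the genuine difficulty is the convergence of the $E$-integral defining $S_l^\gamma(r,s)$, which for $\gamma\neq0$ only makes sense in a regularised or distributional sense — this is exactly why \eqref{fractiondef} is introduced only \emph{formally}, its precise meaning and boundedness being what the weighted estimates of the main theorem ultimately quantify. A secondary delicate point is the vanishing of the boundary terms in (2) in the non-essentially-self-adjoint range $-1<l+\alpha<0$, where the identity is to be understood for the self-adjoint realisation singled out by the eigenstates \eqref{contspec}.
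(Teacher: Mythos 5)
Your proposal is correct and takes essentially the same route as the paper: the paper likewise views $\mathcal{P}_l^{\pm}$ as sums of Hankel transforms (this is exactly Remark \ref{hankel}, which says the properties are ``transferred'' from classical Hankel theory), proves (2) by moving $\parmag$ onto the eigenfunctions via self-adjointness in $L^2_{rdr}$ and the eigenvalue equation \eqref{eqspec}, and obtains (4) by composing the kernels and exchanging the order of integration. Your explicit cross-term cancellations for (1) and the diagonality computation for (3) simply fill in what the paper dispatches as ``a standard feature of Hankel transform'' and ``a direct calculation.''
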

%

\begin{remark}\label{fractpow}
When summing on $l$, property \eqref{fractiondef} defines in a standard way fractional powers of the Hamiltonian $\mathcal{D}_A$, which are used in the statement of Theorem \eqref{thlocsmooth}.
\end{remark}

\begin{remark}\label{hankel}
Let us notice that $\mathcal{P}_l^{+}$ and $\mathcal{P}_l^{-}$ are (the radial part of) a sum of Hankel transforms: indeed, due to \eqref{contspec}, we have that (some factors $\pi$ will be neglected)
$$
\mathcal{P}_l^{+}\phi(E)=\int_0^\infty \big(J_{|l+\alpha|}(Er)\phi_1(r)+J_{|l+1+\alpha|}(Er)\phi_2(r)\big)r dr
$$
and, due to \eqref{negen}, a similar one for $\mathcal{P}_l^{-}$. In Proposition above we are thus just transferring to our framework several important properties that are well known for Hankel (see e.g. \cite{burplan}).

\end{remark}

\begin{proof}

Property $(1)$ is a standard feature of Hankel transform. 

Property $(2)$ comes from the definition of $\mathcal{P}_l$, once noticed that
$$
\mathcal{P}^+_l(\parmag\varphi)=\langle \chi_{l, E},\parmag\varphi\rangle_{L^2_{rdr}}=\langle \parmag \chi_{l, E},\varphi\rangle_{L^2_{rdr}}= E\langle  \chi_{l, E},\varphi\rangle_{L^2_{rdr}}
$$
where we have used the fact that $\parmag$ is selfadjoint with respect to the scalar product above (an analogous calculation can be developed for $\mathcal{P}^-$). This shows that
$$
\mathcal{P}_l^{\pm}(\parmag\varphi)(E)=E\mathcal{P}_l^{\pm}\varphi(E),
$$
and thus
$$
\mathcal{P}_l(\parmag\varphi)(E)=\left(\begin{array}{cc}\mathcal{P}^+_l\parmag\varphi(E)\\
\mathcal{P}^-_l\parmag\varphi(E)
\end{array}\right)=
\left(\begin{array}{cc}E\mathcal{P}^+_l\varphi(E)\\
E\mathcal{P}^-_l\varphi(E)
\end{array}\right)= \Omega \varphi(E)
$$
which proves Property $(2)$.

Property $(3)$ is a direct calculation.

To prove property $(4)$ we write
\begin{eqnarray*}
\parmag^\gamma\varphi_l(r)&=&\mathcal{P}_l\Omega^\gamma\mathcal{P}_l\varphi_l(r)
\\
&=&
\int_0^{+\infty}H_{l}(E r)E^{1+\gamma}\left(\int_0^{+\infty}H^*_{l}(E s)\varphi_l(s)sds\right)dE.
\end{eqnarray*}
Exchanging the order of the integrals yields \eqref{fractiondef}-\eqref{mattrix} and thus $(4)$.
%
\end{proof}

\begin{remark}
Notice that, due to \eqref{negen}, 
\begin{equation*}
H_{l}^*=\left(\begin{array}{cc}f_{l,E}(r)\;-f_{l,E}(r)\\
g_{l,E}^*(r)\;g_{l,E}^*(r)
\end{array}\right).
\end{equation*}
\end{remark}
By calculating the integrals, we are able to write down explicitly the single components of the $2\times 2$ integral kernel $S^\gamma_l(r,s)$.

\begin{proposition}\label{compon}
Let $l\in\mathbb{Z}$, $\gamma>0$, $0<r<s$. Then,
\begin{equation}\label{expform}
S_{l}^{\gamma}=\left(\begin{array}{cc}F^\gamma_l(r,s)\;\;G^\gamma_l(r,s)\\ G^\gamma_l(r,s)\;\;
F^\gamma_l(r,s)
\end{array}\right)
\end{equation}
where
\begin{equation}\label{F_1}
\displaystyle
F_l^\gamma(r,s)=A+B,\qquad  G_l^\gamma(r,s)=-A+B
\end{equation}
with
\begin{equation*}
A=\frac{2^{\gamma}\pi\Gamma\left(|l+\alpha|+\frac\gamma2+1\right)}{\Gamma(-\frac{\gamma}2)\Gamma(|l+\alpha|+1)}\frac{r^{|l+\alpha|}}{s^{|l+\alpha|+\gamma+2}}\:
{_2F_1}\left(|l+\alpha|+\frac\gamma2+1, \frac\gamma2+1; |l+\alpha|+1;\frac{r^2}{s^2}\right)
\end{equation*}
and
\begin{equation*}
B=\frac{2^{\gamma}\pi\Gamma\left(|l+\alpha|+\frac{\gamma}2+2\right)}{\Gamma(-\frac{\gamma}2)\Gamma(|l+\alpha|+2)}\frac{r^{|l+1+\alpha|}}{s^{|l+\alpha|+\gamma+3}}
{_2F_1}\left(|l+\alpha|+\frac\gamma2+2, \frac\gamma2+1; |l+\alpha|+2;\frac{r^2}{s^2}\right).
\end{equation*}

\end{proposition}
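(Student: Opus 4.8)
The plan is to reduce the $2\times2$ kernel \eqref{mattrix} to a pair of scalar Bessel integrals and then to invoke the Weber--Schafheitlin formula. First I would compute the matrix product $H_l(Er)\cdot H_l^*(Es)$ entrywise. Using \eqref{contspec}, \eqref{negen} and the identities $f_{l,-E}=f_{l,E}$, $g_{l,-E}=-g_{l,E}$ (so that the conjugate transpose takes the form recorded in the Remark preceding the statement), a direct computation gives
\[
H_l(Er)\cdot H_l^*(Es)=\begin{pmatrix} ff+gg^* & -ff+gg^* \\ -ff+gg^* & ff+gg^* \end{pmatrix},
\]
where I abbreviate $ff:=f_{l,E}(r)f_{l,E}(s)$ and $gg^*:=g_{l,E}(r)g_{l,E}^*(s)$. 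Setting $A:=\int_0^\infty ff\,E^{1+\gamma}\,dE$ and $B:=\int_0^\infty gg^*\,E^{1+\gamma}\,dE$ and integrating \eqref{mattrix} against $E^{1+\gamma}\,dE$ already produces the symmetric structure \eqref{expform} with $F_l^\gamma=A+B$ and $G_l^\gamma=-A+B$, exactly as in \eqref{F_1}. It then only remains to evaluate the two scalar integrals.

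Second I would substitute the Bessel representation from \eqref{contspec}. Since $(\epsilon_l)^{2l}=1$ and $g_{l,E}(r)g_{l,E}^*(s)$ carries the factor $i\cdot(-i)\,(\epsilon_l)^{2(l+1)}=1$, all phase factors collapse and
\[
A=\frac\pi2\int_0^\infty J_{|l+\alpha|}(Er)\,J_{|l+\alpha|}(Es)\,E^{1+\gamma}\,dE,\qquad B=\frac\pi2\int_0^\infty J_{|l+1+\alpha|}(Er)\,J_{|l+1+\alpha|}(Es)\,E^{1+\gamma}\,dE.
\]
Both are instances of the Weber--Schafheitlin integral $\int_0^\infty J_\nu(Er)J_\nu(Es)\,E^{-\lambda}\,dE$ with $\lambda=-(1+\gamma)$ and the larger argument $s>r$. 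The tabulated evaluation (see \cite{erd}) gives, for $0<r<s$,
\[
\int_0^\infty J_\nu(Er)J_\nu(Es)\,E^{1+\gamma}\,dE=\frac{2^{1+\gamma}\,\Gamma\!\left(\nu+\tfrac\gamma2+1\right)}{\Gamma\!\left(-\tfrac\gamma2\right)\Gamma(\nu+1)}\,\frac{r^\nu}{s^{\nu+\gamma+2}}\;{_2F_1}\!\left(\nu+\tfrac\gamma2+1,\tfrac\gamma2+1;\nu+1;\tfrac{r^2}{s^2}\right).
\]
Taking $\nu=|l+\alpha|$ and multiplying by $\pi/2$ reproduces the stated expression for $A$ verbatim; taking $\nu=|l+1+\alpha|$ and using $|l+1+\alpha|=|l+\alpha|+1$ (valid in the range $l+\alpha\ge0$, the opposite sign case being symmetric) reproduces $B$.

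The main obstacle is convergence, not algebra. At $E\to0$ the integrand is $O(E^{2\nu+1+\gamma})$ and harmless, but at $E\to\infty$ the product of Bessel functions decays only like $E^{-1}$ modulated by an oscillation, so the integrand behaves like $E^\gamma$ times a bounded oscillatory factor; for $\gamma>0$ the integral fails to converge even conditionally. Thus \eqref{mattrix} must be read as a regularized oscillatory integral: one inserts a convergence factor $e^{-\delta E}$, evaluates in closed form in the range of $\gamma$ where the integral genuinely converges, and then analytically continues in $\gamma$ while letting $\delta\to0^+$. The hypothesis $0<r<s$ is precisely what makes this continuation a bona fide function rather than a distribution concentrated on the diagonal, since off the diagonal the phases $E(r\pm s)$ are nonzero and regularize the tail; correspondingly, the factor $\Gamma(-\tfrac\gamma2)^{-1}$ in $A$ and $B$ is the visible trace of the continuation, being finite and nonzero for generic $\gamma>0$. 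Justifying the interchange of the $dE$-integration with this regularization, and identifying the continued value with the tabulated hypergeometric, is the one delicate step; the rest is bookkeeping.
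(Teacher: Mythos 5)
Your proposal is correct and takes essentially the same route as the paper: compute $H_l(Er)\cdot H_l^*(Es)$ entrywise to obtain the $\pm ff+gg^*$ structure, reduce to the two scalar integrals $\int_0^\infty J_\nu(Er)J_\nu(Es)\,E^{1+\gamma}\,dE$ with $\nu=|l+\alpha|$ and $\nu=|l+1+\alpha|$, and evaluate them by the Weber--Schafheitlin formula, which is precisely the paper's \eqref{intbess} specialized to $\nu=\mu$ and $\lambda=-1-\gamma$. Your closing discussion of convergence is in fact more careful than the paper's, which merely asserts that ``our assumptions on the parameters allow us to rely on such a formula'' even though the hypothesis $\mathrm{Re}(\lambda)>-1$ of \eqref{intbess} fails for $\gamma>0$ (it is the negative powers $S_l^{-\gamma}$, $\gamma>1/2$, actually used in the proof of Theorem \ref{thlocsmooth}, for which the formula applies directly), so your regularization/analytic-continuation reading, together with your caveat that $|l+1+\alpha|=|l+\alpha|+1$ only when $l+\alpha\geq 0$, is the right gloss on the statement.
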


\begin{remark}
The representation in the region $0<s<r$ can be obtained by exchanging the roles of $r$ and $s$ in formulas above.
\end{remark}

\begin{remark}
This result should be compared with formula $(10)$ in \cite{burplan} (see also \cite{planc}).
\end{remark}

\begin{proof}
We rely on explicit formulas to calculate the integrals in \eqref{expform}, which have already been object of detailed analysis in the literature. We recall indeed the general results (see e.g. \cite{mag} pag. 49)
\begin{equation}\label{intbess}
\displaystyle
\int_0^\infty J_\nu(rt) J_\mu(st) t^{-\lambda} dt=
\frac{r^\nu \Gamma\left(\frac{\nu+\mu-\lambda+1}2\right)}{2^\lambda s^{\nu-\lambda+1}\Gamma\left(\frac{-\nu+\mu+\lambda+1}2\right)\Gamma(\nu+1)}
\end{equation}
\begin{equation*}
\times {_2F_1}\left(\frac{\nu+\mu-\lambda+1}2, \frac{\nu-\mu-\lambda+1}2; \nu+1;\frac{r^2}{s^2} \right),
\end{equation*}
provided ${\rm Re}(\nu+\mu-\lambda+1)>0$, ${\rm Re} (\lambda)>-1$ and $0<r<s$. We rely on this formula to evaluate our integrals. We have indeed that
\begin{eqnarray*}
F_l^\gamma(r,s)&=&\int_0^\infty (f_{l,E}(r)f_{l,E}(s)+g_{l,E}(r)g_{l,E}^*(s)) E^{1+\gamma}dE
\\
&=&
\int_0^\infty
\big(J_{|l+\alpha|}(Er)J_{|l+\alpha|}(Es)+
J_{|l+1+\alpha|}(Er)J_{|l+1+\alpha|}(Es)\big) E^{1+\gamma}dE.
\end{eqnarray*}
Applying \eqref{intbess} with the choice $\nu=\mu=|l+\alpha|$ (resp. $\nu=\mu=|l+1+\alpha|$) and $\lambda=-1-\gamma$ (notice that our assumptions on the parameters allow us to rely on such a formula), gives \eqref{F_1}. Analogously one obtains that
\begin{eqnarray*}
G_l^\gamma(r,s)&=&\int_0^\infty (-f_{l,E}(r)f_{l,E}(s)+g_{l,E}(r)g_{l,E}^*(s)) E^{1+\gamma}dE
\\
&=&
\int_0^\infty
\big(-J_{|l+\alpha|}(Er)J_{|l+\alpha|}(Es)+
J_{|l+1+\alpha|}(Er)J_{|l+1+\alpha|}(Es)\big) E^{1+\gamma}dE.
\end{eqnarray*}
Applying \eqref{intbess} concludes the proof.
\end{proof}

\subsection{The norm induced by $\mathcal{D}_A$}

We conclude this section with the following Lemma, which is a key ingredient for proving Theorem \ref{strichartz}.

\begin{lemma}\label{confnorm}
Let $\alpha\in \mathbb{R}$. For any $s\in[0,1]$ 
\begin{equation}\label{normconf1}
\| f\|_{\dot{H}^s}\leq C_1\|\mathcal{D}_A f\|_{L^2}.
\end{equation}
\end{lemma}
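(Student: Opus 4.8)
The plan is to prove the scale-invariant inequality $\|f\|_{\dot H^s}=\||D|^s f\|_{L^2}\le C\|\mathcal{D}_A^s f\|_{L^2}$ (homogeneity of $\mathcal{D}_A$ forces the exponent $s$ on the right, and this is the form actually used in the proof of Theorem~\ref{strichartz}). First I would diagonalize both sides in the partial-wave basis of Proposition~\ref{thaller}. By density it suffices to treat $f\in C_0^\infty$; writing its radial parts $(f_l,g_l)$ as in \eqref{iso2}, both $|D|^s=(-\Delta)^{s/2}$ (which commutes with rotations) and $\mathcal{D}_A^s$ act block-diagonally on the sectors. Denoting by $\mathcal{H}_\nu$ the Hankel transform of order $\nu$ and setting $|D|^s_\nu:=\mathcal{H}_\nu E^s\mathcal{H}_\nu$, the first spinor component carries angular frequency $l$ and the second $l+1$, so
\[
\|f\|_{\dot H^s}^2=\sum_{l}\big(\||D|^s_{|l|}f_l\|^2+\||D|^s_{|l+1|}g_l\|^2\big).
\]
For the magnetic side, Proposition~\ref{properties} shows that $\mathcal{P}_l$ is an $L^2$-isometry conjugating $\parmag$ to multiplication by $E$, so $\|\mathcal{D}_A^s f\|^2=\sum_l\|E^s\,\mathcal{P}_l(f_l,g_l)\|_{L^2(EdE)}^2$; the explicit form of $\mathcal{P}_l^{\pm}$ in Remark~\ref{hankel} together with \eqref{negen} makes the cross terms in $|\mathcal{P}_l^+|^2+|\mathcal{P}_l^-|^2$ cancel, so the $2\times2$ system decouples at the level of $L^2$-norms and
\[
\|\mathcal{D}_A^s f\|^2=\sum_{l}\big(\||D|^s_{|l+\alpha|}f_l\|^2+\||D|^s_{|l+1+\alpha|}g_l\|^2\big).
\]
Thus everything reduces to the scalar comparison $\||D|^s_{|m|}h\|\le C\||D|^s_{|m+\alpha|}h\|$, uniformly in $m\in\Z$ (to be applied for $m=l$ and $m=l+1$).

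For the scalar inequality I would work with the Bessel operator $L_\nu=-\partial_r^2-r^{-1}\partial_r+\nu^2 r^{-2}$, which the Hankel transform of order $\nu$ turns into multiplication by $E^2$; thus $|D|^s_\nu=L_\nu^{s/2}$ and $\||D|^s_\nu h\|^2=\langle L_\nu^s h,h\rangle_{L^2(rdr)}$. The key observation is an operator inequality: since $L_\mu-L_\nu=(\mu^2-\nu^2)r^{-2}$ and $r^{-2}=\nu^{-2}(L_\nu-L_0)\le \nu^{-2}L_\nu$ (because $L_0\ge0$), one gets $0\le L_\mu\le(\mu/\nu)^2 L_\nu$ whenever $\mu\ge\nu>0$ (and trivially $L_\mu\le L_\nu$ when $\mu\le\nu$). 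By the L\"owner--Heinz theorem the map $t\mapsto t^s$ is operator monotone for $s\in[0,1]$, so $L_\mu^s\le\max(1,(\mu/\nu)^2)^s L_\nu^s$, and pairing with $h$ gives $\||D|^s_\mu h\|\le\max(1,\mu/\nu)^s\,\||D|^s_\nu h\|$.

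It then remains to insert $(\mu,\nu)=(|l|,|l+\alpha|)$ and $(\mu,\nu)=(|l+1|,|l+1+\alpha|)$ and to check uniformity. Since $\big|\,|m|-|m+\alpha|\,\big|\le|\alpha|$ while $|m+\alpha|\ge\mu_0:=\mathrm{dist}(\alpha,\Z)$ for every $m\in\Z$, the factors $\max(1,|m|/|m+\alpha|)$ are bounded by a constant depending only on $\alpha$ (and tend to $1$ as $|m|\to\infty$). Summing over $l$ and over the two components yields $\|f\|_{\dot H^s}\le C(\alpha)\|\mathcal{D}_A^s f\|_{L^2}$, as claimed.

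The main obstacle is localized in the sectors whose Bessel order $|m+\alpha|$ is closest to the flux, where the Hardy-type bound $r^{-2}\le\nu^{-2}L_\nu$ degenerates. For $\alpha\notin\Z$ one has $\mu_0>0$ and the argument above closes on the whole range $s\in[0,1]$. When $\alpha\in\Z$ a single sector has $\nu=|m+\alpha|=0$, the operator inequality fails at the endpoint $s=1$, and the scalar comparison must instead be extracted from the explicit Mellin symbol of $L_\mu^{s/2}L_0^{-s/2}$ --- equivalently from the kernel representation of Proposition~\ref{compon}, whose Gamma-function normalizations require $s<1+|l+\alpha|$ and so remain valid precisely for $s<1$, consistent with the range $s\in(0,1)$ quoted above. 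Making this uniform passage to the critical sector rigorous is the delicate point of the proof.
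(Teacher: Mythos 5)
Your proposal is correct in substance but follows a genuinely different route from the paper's. The paper's own proof is essentially two lines: using the anticommutation relations of the Pauli matrices (together with the fact that the Aharonov--Bohm field has zero magnetic field away from the origin, so the cross term $\sigma_1\sigma_2[\partial_1^A,\partial_2^A]$ drops out) it establishes the exact identity $\|\mathcal{D}_Af\|_{L^2}=\|\nabla_Af\|_{L^2}$, then obtains the case $s=1$ from the diamagnetic-type inequality for the magnetic gradient (citing \cite{fanfel2,fanfel3,FGK}) and fills in $s\in(0,1)$ by interpolation with the trivial case $s=0$. You instead diagonalize both sides sector by sector via Hankel transforms and reduce to the scalar comparison $L_{|m|}^{s}\le C\,L_{|m+\alpha|}^{s}$, proved from the Hardy-type form bound $r^{-2}\le\nu^{-2}L_\nu$ plus L\"owner--Heinz; your decoupling of the $2\times2$ system (cancellation of cross terms in $|\mathcal{P}_l^+|^2+|\mathcal{P}_l^-|^2$, using \eqref{contspec} and \eqref{negen}) checks out. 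What each approach buys: the paper's argument is short, avoids the Hankel machinery entirely, and adapts to other dimensions, but it compresses the real content of the $s=1$ step --- the diamagnetic inequality alone only controls $\|\,|f|\,\|_{\dot H^1}$, and what is actually needed is the magnetic Hardy inequality with constant $\mu_0=\mathrm{dist}(\alpha,\mathbb{Z})$, which is exactly what your bound $r^{-2}\le\nu^{-2}L_\nu$ with $\nu\ge\mu_0$ makes explicit. Your version yields sector-uniform constants degenerating only as $\mu_0\to0$, and correctly detects that the endpoint $s=1$ genuinely fails when $\alpha\in\mathbb{Z}\setminus\{0\}$: taking $f=h(r)e^{-i\alpha\phi}$ one has $\|\nabla_Af\|_{L^2}^2=\int_0^\infty|h'|^2r\,dr$ while $\|f\|_{\dot H^1}^2$ contains $\alpha^2\int_0^\infty|h|^2r^{-1}dr$, which is not controlled since the two-dimensional Hardy inequality fails --- a degenerate case that the paper's appeal to diamagnetism glosses over (and which is harmless, since $\alpha\in\mathbb{Z}$ is gauge-trivial and the Strichartz application only uses $s\in(0,1)$). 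Two minor points you should tighten: the $\mathcal{P}_l^{\pm}$ bookkeeping (these diagonalize $\parmag$ with eigenvalues $\pm E$, so your powers $E^s$ are really powers of $|\parmag|$, consistent with the paper's definition \eqref{fractiondef}), and in the critical sectors $-1<l+\alpha<0$ one must note that the Hankel realization of $L_{|l+\alpha|}$ is the Friedrichs-type extension selected by \eqref{contspec}, so that the form inequality and L\"owner--Heinz apply to the correct self-adjoint operators; both are routine.
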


\begin{proof}
We prove the equivalence $\|\nabla_Af\|_{\dot{H}^s}\cong\|\mathcal{D}_A f\|_{L^2}$, where we are denoting with 
$$
\nabla_A=(\partial_1^A,\partial_2^A)=(\partial_1+iA_1(x),\partial_2+iA_2(x))
$$ 
the magnetic gradient; then estimate \eqref{normconf1} will be a consequence of diamagnetic inequality (see e.g. \cite{fanfel2, fanfel3, FGK}). Moreover, it is enough to prove the case $s=1$, as the full range of exponents can then be obtained by interpolation (the case $s=0$ is obvious). But this is an immediate consequence of the relation of anticommutations of Pauli matrices as indeed
\begin{eqnarray*}
\|\mathcal{D}_A f\|_{L^2}^2&=&\int |(\sigma_1\partial^1_A+\sigma_2\partial^2_A)f|^2
\\
&=&
\int \big|\big[(\sigma_1\partial^1_A)^2+(\sigma_2\partial^2_A)^2+(\sigma_1\partial_1^A\sigma_2\partial_2^A+
\sigma_2\partial_2^A\sigma_1\partial_1^A)\big]f\big|
\\
&=&
\int|\nabla_A f|^2
\end{eqnarray*}
which concludes the proof.
\end{proof}

\section{Proof of the main results}

We devote this section to proving our main Theorems.

\subsection{Proof of Theorem \ref{thlocsmooth}}
Our proof is a combination of the arguments used in \cite{cacser} with the ones in \cite{cacfan}, and follows the strategy originally developed in \cite{burplan}, the idea being use decomposition \eqref{iso2} to reduce equation \eqref{diraceq} to a much simpler problem, use Propositions \eqref{properties} and \eqref{compon} to prove the local smoothing estimate for a fixed value of $l\in\mathbb{Z}$ and then sum back. We thus set an initial condition $f\in L^2$ with angular part in $h_l$ and denote with $L_l f$ the solution to the Cauchy problem
\begin{equation}\label{diraccoulmod}
\begin{cases}
\displaystyle
 i\partial_tu=\parmag u,\\
u(0,x)=f(x)
\end{cases}
\end{equation}
where $\parmag$ is given by \eqref{partham}. Then, by applying operator $\mathcal{P}_l$ and using its properties, the LHS of estimate \eqref{est} is equivalent to (notice that the application of the matrix $\sigma_3$ does not alter the $L^2$ norm)
$$
\| \PP \Omega^{-\gamma} \parmag^{1/2-\gamma} L_l f\|_{L^2_tL^2_x}=\| \parmag^{-\gamma} \Omega^{1/2-\gamma}\PP L_l f\|_{L^2_tL^2_x}
$$
The function $\mathcal{P}_lL_lf$ solves now
\begin{equation}\label{diracproj}
\begin{cases}
\displaystyle
 i\partial_t\mathcal{P}_lL_lf=\Omega \mathcal{P}_lL_lf,\\
\mathcal{P}_lL_lf(0,\xi)=\mathcal{P}_lf(\xi),
\end{cases}
\end{equation}
so that the solution to this problem is explicitly given by
\begin{equation*}
\mathcal{P}_kL_kf(t,\xi)=e^{it\xi}\mathcal{P}_kf(\xi).
\end{equation*}
We now Fourier transform in time (which does not alter the $L^2$ norm) to have
\begin{equation*}
(\mathcal{F}_t\mathcal{P}_lL_lf)(\tau,\xi)=(\mathcal{P}_lf)(\rho)\delta(\tau+\xi).
\end{equation*}
Therefore, we can write
\begin{eqnarray*}
(\parmag^{-\gamma}\Omega^{1/2-\gamma}\mathcal{F}_t\mathcal{P}_lL_lf)(\tau,\xi)&=&
\int_0^{+\infty}S_l^{-\gamma}(\xi,s)\delta(\tau+s)\mathcal{P}_lf(s)s^{\frac{3-2\gamma}2}ds
\\
&=&
-S_l^{-\gamma}(\xi,\tau)\mathcal{P}_lf(\tau)\tau^{\frac{3-2\gamma}2}.
\end{eqnarray*}
We now take the $L^2$ norm in time and space of quantity above  (notice that, as the angular part in decomposition \eqref{iso2} is $L^2$-unitary, we only need to consider the radial integrals)
\begin{equation}\label{1}
\int_0^{+\infty}\int_0^{+\infty}((\mathcal{P}_lf)^*(\tau)S_l^{-\gamma}(\rho,\tau)^T)\cdot(S_l^{-\gamma}(\rho,\tau)(\mathcal{P}_lf)(\tau))\tau^{3-2\gamma}\rho^2 d\rho.
\end{equation}
Since $S_l^p(\rho,\tau)^T=S_l^p(\tau,\rho)$, the integral in $d\rho$ yields $S_l^{-2\gamma}(\tau,\tau)$ and we are therefore left with
\begin{equation*}
\int_0^{+\infty}(\mathcal{P}_lf)^*(\tau)S_l^{-2\gamma}(\tau,\tau)(\mathcal{P}_lf)(\tau)\tau^{3-2\gamma} \:d\tau
\end{equation*}
\begin{equation}\label{2}
\leq
\int_0^{+\infty} {\rm Tr}(S_l^{-2\gamma}(\tau,\tau))|(\PP f)(\tau)|^2\tau^{3-2\gamma} \:d\tau
\end{equation}
where in the last step we have used the fact that the matrix $S^{-\gamma}_l$ is positive definite in the diagonal values, as it is the integral kernel of a positive definite operator.
To conclude the proof we thus need to estimate the integral above: in view of Proposition \eqref{compon} we recall the explicit formula of Gauss hypergeometric functions with argument $1$, to be
\begin{equation}\label{specva}
{_2F_1}(a,b;c;1)=\frac{\Gamma(c)\Gamma(c-a-b)}{\Gamma(c-a)\Gamma(c-b)}
\end{equation}
provided ${\rm Re}(c-a-b)>0$ (notice that this restriction forces the bound $\gamma>1/2$);
we also stress that when $\tau:=r=s$ we have the equivalence of the ratios in \eqref{F_1}
$$
\frac{r^{|l+\alpha|}}{s^{|l+\alpha|-2\gamma+2}}=\frac{r^{|l+1+\alpha|}}{s^{|l+\alpha|-2\gamma+3}}=
\tau^{2\gamma-2}
$$
which is the right weight which allows to recover the $L^2$-norm in \eqref{2}. Therefore, we eventually have 
\begin{eqnarray*}
\eqref{2}\leq C_{\gamma,\alpha,l}\int_0^{+\infty}|(\PP f)(\tau)|^2\tau \:d\tau\leq C_{\gamma,\alpha,l}\int_0^{+\infty}|f(\tau)|^2\tau \:d\tau=C_{\gamma,\alpha,l}\| f\|_{L^2}
\end{eqnarray*}
where the constant
\begin{equation}\label{constant}
C_{\gamma,\alpha,l}=\frac{\pi\Gamma(2\gamma-1)}{2^{2\gamma}\Gamma(\gamma)^2}\left[\frac{\Gamma(|l+\alpha|-\gamma+1)}{\Gamma(|l+\alpha|+\gamma)}+
\frac{\Gamma(|l+\alpha|-\gamma+2)}{\Gamma(|l+\alpha|+\gamma+1)}\right]
\end{equation}
Notice that our assumption on the range of $\gamma$ is now necessary to guarantee the constant $C_{\gamma,\alpha,l}$ to be finite, as indeed we are forced to assume $\gamma>1/2$ and $\gamma<|l+\alpha|+1$. This proves the inequality for a fixed level $l$. Notice also that within our range $C_{\gamma,\alpha,l}$ is in fact a decreasing function of $l$; this allows us to rely on decomposition \eqref{iso2} and use triangle inequality to conclude the proof.

We now turn to the proof of estimate \eqref{locendec}.

In analogy with what has been done above, we resort on decomposition \eqref{iso2} and prove it for a fixed value of $l$, showing that in fact the estimate holds with a constant independent on $l$: this will allow to sum back and obtain \eqref{locendec}. We denote for a fixed value of $l\in\mathbb{Z}$
$$
F_l(r,\phi)=\left(\begin{array}{cc} f_l(r)e^{il\phi}\\
i g_l(r) e^{i(l+1)\phi}
\end{array}\right),\quad F_l(r)=\left(\begin{array}{cc} f_l(r)\\
i g_l(r) 
\end{array}\right),\quad
Y_l(\phi)=
\left(\begin{array}{cc}e^{il\phi}\\
e^{i(l+1)\phi}
\end{array}\right)
$$ 

Writing indeed
\begin{eqnarray*}
\|\sol\|_{L^2_t L^2_{|x|\leq R}}^2&=&
\|\sum_{l\in\mathbb{Z}}\ksol(r,\phi)\||_{L^2_t L^2_{|x|\leq R}}^2
\\
&\leq&
\sum_{l\in\mathbb{Z}}\| \ksol (r,\phi)\|_{L^2_t L^2_{|x|\leq R}}^2
\\
&=&
\sum_{l\in\mathbb{Z}}
\|e^{it\mathcal{D}_A}F_l(r)\|_{L^2_{rdr}(0,R)}^2\|Y_l(\phi)\|_{L^2_\phi}^2
\end{eqnarray*}
so that, by the unitarity of the angular term, it will be enough to prove
$$
\sup_{R>0}\|e^{it\mathcal{D}_A}F_l(r)\|_{L^2_{rdr}(0,R)}^2\leq C_l\|F_l(r) \|_{L^2_{rdr}}
$$
and show that the constant $C_l$ is bounded with respect to $l$.
We start by writing
\begin{eqnarray*}
\ksol&=&\PP^{}\left[e^{it|\xi|}\PP f\right]
\\
&=&
\int_0^{+\infty} e^{it|\xi|}H_l(r\xi)\cdot \PP f(\xi) \xi\:d\xi
\\
&=&
\four_{|\xi|\rightarrow t} \left\{H_l(r\xi)\cdot \PP f(\xi)\xi \heav\right\}.
\end{eqnarray*}
Taking the $L^2_tL^2_{|x|\leq R}$ norm then gives, by Plancherel,
\begin{eqnarray}\label{stim1}
\nonumber
\|\ksol\|_{L^2_tL^2_{|x|\leq R}}^2
&=&\| \four_{|\xi|\rightarrow t} \left\{H_l(r\xi)\cdot \PP f(\xi)\xi \heav\right\}\|_{L^2_tL^2_{|x|\leq R}}^2
\\
\nonumber
&=&
\| H_l(r\xi)\cdot \PP f(\xi)\xi \heav\|_{L^2_\xi L^2_{|x|\leq R}}^2
\\
&=&
\int_0^{+\infty} \left(\int_0^{R}\left(H_l(r\xi) \cdot \PP f(\xi)\right)^2r\:dr\right)\xi^2\:d\xi.
\end{eqnarray}
We now consider the double integral above componentwise ($H_k$ is a matrix), and deal with each component separately. The first one (the second one is analogous) reads as
\begin{equation}\label{comp1}
\int_0^{+\infty} \left(\int_0^{R}\left( \chi_l (r\xi)\right)^2r\:dr\right)|\PP f(\xi)|^2\xi^2\:d\xi
\end{equation}
where we recall that $\chi_{l}$ denotes the radial component of the generalized eigenfunctions of the operator $\mathcal{D}_{A,l}$. To estimate inner integral in \eqref{comp1} we rely on the following estimate (see \cite{strichartz}) 
$$
\int_0^R J_k(r|\xi|)^2 rdr<\frac{CR}{|\xi|}
$$
which holds with a constant $C$ independent on $R$ and $k$. Thus, we eventually obtain 
$$
\eqref{comp1}\lesssim C R\int_0^{+\infty} |\PP f(\xi)|^2 \xi\:d\xi =R\|f\|_{L^2}^2
$$
as $\PP $ is an isometry on $L^2$. Notice that the constant $C$ does not depend on $l$; therefore we can sum in decomposition \eqref{iso2} and by using the triangle inequality we obtain \eqref{locendec}.

\subsection{Proof of Therem \ref{ksstheo}}

The argument here turns out to be only a slight modification of the original one for the wave equation in \cite{kss} (see also \cite{jwy}) as, in fact, the only tools needed are the local smoothing estimate given by \eqref{locendec} and a standard energy estimate. We include the proof here anyway for the sake of completeness. 

We start from the case $\mu<- 1/2$: in this range we have, by applying estimate \eqref{locendec},
\begin{eqnarray*}
\| \langle x\rangle^\mu \sol \|_{L^2_tL^2_x}\lesssim \sum_{j\geq0}2^{j\mu}\|\sol\|_{L^2_tL^2_{|x|\leq 2^j}}
\lesssim
\sum_{j\geq 0}2^{j(\mu+1/2)}\|f\|_{L^2_x}
\leq
\|f\|_{L^2_x}.
\end{eqnarray*}
Now we deal with the case $-1/2\leq\mu\leq0$; we start by considering the case $T\leq 1$. Here, estimate \eqref{kss} is in fact weaker than the energy estimate
$$
\|\sol \|_{L^\infty_tL^2_x}\leq \|f\|_{L^2},
$$
so that we can immediately write
$$
\|\langle x\rangle^\mu \sol\|_{L^2_TL^2_x}\lesssim T^{1/2}\|\sol\|_{L^\infty_TL^2_x}\lesssim A_\mu(T)\|f\|_{L^2_x}
$$
as $\mu\geq-1/2$. In the region $T\geq2$ we can use energy estimate to have a control on the region $\{x:|x|\geq T\}$ as follows
$$
\|\langle x\rangle^\mu \sol\|_{L^2_T L^2_{|x|\geq T}} \lesssim T^\mu\|\sol \|_{L^2_TL^2_x}
\leq T^{1/2+\mu}\|f\|_{L^2_x}\lesssim A_\mu(T)\|f\|_{L^2_x}.
$$
For the remaining region, i.e. $T\in(1,2)$, we rely again on \eqref{locendec} to write
\begin{eqnarray*}
\|\langle x\rangle^\mu \sol\|_{L^2_TL^2_x}^2&\leq& \sum_{0\leq j\lesssim \ln(T)}2^{2j\mu}\|\sol\|^2_{L^2_TL^2_{|x|}\leq 2^j}
\\
&\leq&
\sum_{0\leq j\lesssim \ln(T)}2^{2j(\mu+1/2)}\|f\|^2_{L^2x}
\\
&\leq&
A_\mu(T)^2\|f\|_{L^2_x}^2.
\end{eqnarray*}

Eventually, we give a proof of \eqref{kss2}, which is a simple scaling argument from \eqref{kss}. If we considered for some $\lambda>0$ the rescaled function $f_\lambda(x)=\lambda^{\frac{n}2}f(\lambda x)$ so that $\left(e^{it\mathcal{D}_A}f_\lambda\right)x)=\lambda^\frac{n}2\left(e^{it\mathcal{D}_A\lambda t}f\right)(\lambda x)$, we have that estimate \eqref{kss} reads as
\begin{equation}\label{scaledkss}
\|\langle x\rangle^\mu\sol_\lambda\|_{L^2_{T/\lambda}L^2_x}\leq
\lambda^{-(\frac12+\mu)}T^{\frac12+\mu}\|f\|_{L^2}.
\end{equation}
Now we rewrite the (square of the) LHS of \eqref{scaledkss} by exploiting the change of variables $s=\lambda t$ and then $y=\lambda x$ to have
\begin{align*}
&\int_0^{T/\lambda} \| (1+|x|^2)^{\frac\mu2}\left(\sol_\lambda\right)(x)\|_{L^2_x}^2dt
\\
&\ \ \ 
=\lambda^{-\frac12}\int_0^{T} \| \lambda^{\frac32}(1+|x|^2)^{\frac\mu2}\left(e^{is\mathcal{D}_A}f\right)(\lambda x)\|_{L^2_x}^2ds
\\
&\ \ \ 
=\lambda^{-(\frac12+\mu)}\int_0^{T} \int_{\mathbb{R}^3} (\lambda^2+|y|^2)^\mu\left(e^{is\mathcal{D}_A}f\right)^2(y)dyds.
\end{align*}
Plugging identity above into \eqref{scaledkss} and sending $\lambda\rightarrow 0$ will give \eqref{kss2}, by dominated convergence.


\end{document}